\DeclareMathOperator{\Orb}{Orb}
\DeclareMathOperator{\PrePer}{PrePer}
\DeclareMathOperator{\Per}{Per}
\newtheorem{theorem}{Theorem}[section]
\newtheorem{lemma}[theorem]{Lemma}
\newtheorem{proposition-definition}[theorem]{Proposition-Definition}
\newtheorem{corollary}[theorem]{Corollary}
\newtheorem{conjecture}[theorem]{Conjecture}
\theoremstyle{definition}
\theoremstyle{remark}
\newtheorem{remark}[theorem]{Remark}
\renewcommand*\env@matrix[1][*\c@MaxMatrixCols c]{%
  \hskip -\arraycolsep
  \let\@ifnextchar\new@ifnextchar
  \array{#1}}
\theoremstyle{remark}
\newcommand*{\Scale}[2][4]{\scalebox{#1}{$#2$}}%
\newcommand{\Mod}[1]{\ (\textup{mod}\ #1)}     
\title[Finite orbit points for sets of quadratic polynomials]{Finite orbit points for sets of quadratic polynomials} 
\begin{document}
\author[Wade Hindes]{Wade Hindes}
\address{Department of Mathematics, Texas State University, 601 University Dr., San Marcos, TX 78666.}
\email{wmh33@txstate.edu}
\maketitle
\renewcommand{\thefootnote}{}
\footnote{2010 \emph{Mathematics Subject Classification}: Primary: 37P05, 37P35 . Secondary: 37P30.}
\begin{abstract} Let $S=\{x^2+c_1, x^2+c_2,\dots, x^2+c_s\}$ be a set of quadratic polynomials with rational coefficients, and let $P$ be a rational basepoint. We classify the pairs $(S,P)$ for which $P$ has finite orbit for $S$, assuming that the maximum period length for each individual polynomial is at most three (conjectured by Poonen). In particular, under these hypotheses we prove that if $s\geq4$, then there are no points $P$ with finite orbit for $S$. Moreover, we use this perspective to formulate an analog of the Morton-Silverman Conjecture for sets of rational functions.       
\end{abstract} 
\section{Introduction}
Let $S$ be a set of rational functions with coefficients in a number field $K$, and let $M_S$ be the monoid of all functions obtained by composing finitely many elements of $S$: 
\[M_S=\Big\{\rho\in K(x)\,: \rho=(\phi_n\circ\phi_{n-1}\circ\dots \circ\phi_1)\;\; \text{for $\phi_i\in S$, $n\in\mathbb{N}$}\Big\}.\]
Then given a basepoint $P\in\mathbb{P}^1(K)$ we define the \emph{orbit} of $P$ with respect to $S$ to be, 
\[\Orb_S(P):=\big\{\rho(P)\in\mathbb{P}^1(K)\;:\:\text{for $\rho\in M_S$}\big\}.\] 
As with the case when $S$ contains a single function, where we can view the monoid $M_S$ as inducing a discrete dynamical system on $\mathbb{P}^1(K)$, we may study the set of \emph{finite orbit points for $S$}, i.e., the set of points $P\in\mathbb{P}^1(K)$ such that $\Orb_S(P)$ is a finite set. In particular, it follows from Northcott's theorem (applied to a single function) that the set of finite orbit points for $S$ (over a fixed $K$) is finite whenever $S$ contains a function of degree at least two; see \cite[Theorem 3.12]{SilvDyn}. On the other hand, if the maps in $S$ are sufficiently ``dynamically independent", then one expects that there can be no points with finite orbit for $S$. We test this heuristic when $S$ is a set of quadratic polynomials and $K=\mathbb{Q}$; see Theorem \ref{thm:classification} below. 
 
However, before we can discuss our results we need some additional notation. Given a rational map $\phi\in K(x)$ and an integer $n\geq1$, let $\Per_n^{**}(\phi,K)$ denote the set of points in $\mathbb{P}^1(K)$ of \emph{exact period} $n$ for $\phi$. That is, $\Per_n^{**}(\phi,K)$ denotes the set of points $P$ such that $\phi^n(P)=P$ and $\phi^m(P)\neq P$ for all $m<n$; see \cite[\S4.1]{SilvDyn}. In particular, it follows again from Northcott's theorem that $\Per_n^{**}(\phi,K)$ is empty for all $n$ sufficiently large; here we assume that $\deg(\phi)\geq2$. With this in mind, we define the quantity   
\[\mu_S(K)=\sup\big\{n:\,\Per_n^{**}(\phi,K)\neq \varnothing\;\;\text{for some $\phi\in S$}\big\},\] 
to be the maximum exact $K$-period length over all maps in $S$. In particular, the main result of this paper states that if $S$ is a set of quadratic polynomials over the field $K=\mathbb{Q}$, then there are no points $P\in\mathbb{Q}$ with finite orbit for $S$, as long as $\mu_S(\mathbb{Q})$ is not too large:         
\begin{theorem}{\label{thm:classification}} Let $S=\{x^2+c_1, x^2+c_2, \dots ,x^2+c_s\}$ for some distinct $c_i\in\mathbb{Q}$, and suppose that $\mu_S(\mathbb{Q})\leq3$. Then the following statements hold:   
\begin{enumerate}[topsep=8pt, partopsep=5pt, itemsep=7pt]  
\item[\textup{(1)}] If $\#S=3$ and $P\in\mathbb{Q}$ has finite orbit for $S$, then \vspace{.2cm}
\[S=\bigg\{x^2-\frac{5}{16},\, x^2-\frac{13}{16},\, x^2-\frac{21}{16}\bigg\}\;\;\text{and}\;\;P\in\bigg\{\pm\frac{1}{4},\,\pm\frac{3}{4},\, \pm\frac{5}{4}\bigg\} \]
or  \vspace{.1cm}
\[\hspace{-.77cm}S=\bigg\{x^2+\frac{3}{16},\, x^2-\frac{5}{16},\, x^2-\frac{13}{16}\bigg\}\;\;\text{and}\;\;P\in\bigg\{\pm\frac{1}{4},\,\pm\frac{3}{4}\bigg\}. \vspace{.2cm}\]
\item[\textup{(2)}] If $\#S\geq4$, then there are no points $P\in\mathbb{Q}$ with finite orbit for $S$.    
\end{enumerate} 
\end{theorem}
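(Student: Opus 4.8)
The plan is to convert finiteness of the orbit into a statement about a finite invariant set and then exploit the unusually rigid relationship among the maps $f_{c_i}(x)=x^2+c_i$. First I would note that $P$ has finite orbit for $S$ if and only if $T:=\Orb_S(P)$ is a finite subset of $\mathbb{Q}$ with $f_{c_i}(T)\subseteq T$ for every $i$; in that case each point of $T$ is preperiodic for each individual $f_{c_i}$, so $T\subseteq \PrePer(f_{c_i},\mathbb{Q})$ simultaneously for all $i$. The hypothesis $\mu_S(\mathbb{Q})\le 3$ then lets me invoke Poonen's conditional classification of the rational preperiodic portraits of a quadratic polynomial: there are only finitely many admissible portraits, each forcing the denominator of $c_i$ to be a perfect square $m_i^2$ and confining $\PrePer(f_{c_i},\mathbb{Q})$ to $\tfrac{1}{m_i}\mathbb{Z}$. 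Because $T$ must lie in every $\tfrac{1}{m_i}\mathbb{Z}$ at once, and because applying $f_{c_i}$ to a point of the wrong denominator immediately leaves $\PrePer(f_{c_j},\mathbb{Q})$, this already forces the $c_i$ to share a common square denominator $m^2$ once $T\neq\varnothing$.

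The engine of the argument is the identity $f_{c_i}(Q)-f_{c_j}(Q)=c_i-c_j$, valid for every $Q$, together with the even symmetry $f_{c_i}(Q)=f_{c_i}(-Q)$. The first says that for each $Q\in T$ the $s$ images $f_{c_1}(Q),\dots,f_{c_s}(Q)$ are distinct elements of $T$ forming the translate $Q^2+\{c_1,\dots,c_s\}$ of the constant set, so $T$ must contain a full copy of $\{c_1,\dots,c_s\}$ sitting above each of its points. I would extract an archimedean constraint from this: taking $Q\in T$ with $|Q|$ maximal and comparing $|Q^2+c_i|\le |Q|$ caps $\max_i c_i$ and the positive extent of $T$, while the preperiodicity/portrait input bounds the remaining constants. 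Combined with the shared-denominator condition, this confines $T$ to a fixed, explicitly bounded finite set of rationals and reduces the entire problem to a finite search over admissible tuples $(c_1,\dots,c_s)$.

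The case analysis then proceeds by matching portraits across the different $c_i$: the translation identity demands that the forward-orbit graphs of $f_{c_i}$ and $f_{c_j}$ coincide after shifting every image by $c_i-c_j$, and this becomes increasingly restrictive as $s$ grows. For $s=3$ I expect exactly the two listed configurations to survive, verified directly by checking that $\{\pm\tfrac14,\pm\tfrac34,\pm\tfrac54\}$ and $\{\pm\tfrac14,\pm\tfrac34\}$ are the only nonempty sets simultaneously invariant under the corresponding triples, using the even symmetry to pair $\pm Q$ and organize the computation. For $s\ge4$ the target is a clean contradiction: a common finite invariant set would have to host, above each of its few points, an entire translate of a four-element set $\{c_1,c_2,c_3,c_4\}$, and I would argue that the portraits permitted by $\mu_S(\mathbb{Q})\le3$ are too small and too rigid to absorb four independent translates, forcing $T=\varnothing$.

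The main obstacle I anticipate is the bookkeeping in the last two steps: one must simultaneously reconcile the archimedean size bound, the common square denominator, the finite list of Poonen portraits, and the translation-compatibility of images across all pairs $(i,j)$, and then certify that the resulting search is complete rather than ad hoc. The cleanest route is probably to fix the small cardinality $\#T$ first, then the cycle-and-tail shape that $\mu_S(\mathbb{Q})\le3$ permits, and only afterward solve for the $c_i$ from the translation relations; this turns the $s\ge4$ impossibility into a bounded, checkable incompatibility rather than an open-ended computation.
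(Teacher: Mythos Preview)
Your translate identity $f_{c_i}(Q)-f_{c_j}(Q)=c_i-c_j$ and the common-denominator observation are both correct and potentially useful, but the step where you claim to reduce to a ``fixed, explicitly bounded finite set of rationals'' is a genuine gap. The archimedean inequality $|Q^2+c_i|\le |Q_{\max}|$ bounds the $c_i$ only in terms of $|Q_{\max}|$, and the shared denominator $m$ is itself not bounded; these constraints are mutually circular rather than jointly finite. Nothing in your bounding argument distinguishes $s\ge 3$ from $s=2$ at that stage, yet for $s=2$ there are genuine one-parameter families of solutions, e.g.\ $(c_1,c_2,P)=\bigl(\tfrac{1-y^2}{4},\,\tfrac{1-(y+2)^2}{4},\,\tfrac{1+y}{2}\bigr)$ for arbitrary $y\in\mathbb{Q}$, with unbounded denominators and point sizes. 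So no universal finite search space exists without further input, and the phrases ``I expect exactly the two listed configurations to survive'' and ``too small and too rigid'' are intuitions, not arguments.

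The paper closes this by a different route. It first classifies pairs $\{f_{c_1},f_{c_2}\}$ admitting a finite-orbit point of a specified type, case-split by the cycle structure (fixed point, $2$-cycle, $3$-cycle) each map carries; Poonen's explicit parametrizations turn each case into a two-variable polynomial system, solved via Gr\"obner bases to yield a short list of one-parameter families plus sporadic pairs (Lemmas~\ref{lem:11}--\ref{lem:23}). For $s=3$ one then intersects two such pair classifications sharing $f_{c_1}$, and a finite case analysis (ten cycle-type combinations, each with several subcases) leaves only the two stated triples. Statement~(2) then follows almost for free from statement~(1): applying (1) to the sub-triples $\{\phi_1,\phi_2,\phi_3\}$ and $\{\phi_1,\phi_2,\phi_4\}$ forces $S$ to contain $\{x^2+\tfrac{3}{16},\,x^2-\tfrac{5}{16},\,x^2-\tfrac{13}{16},\,x^2-\tfrac{21}{16}\}$, and one checks directly that this quadruple admits no common finite-orbit point. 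Your direct attack on $s\ge 4$ bypasses this clean reduction and never produces a concrete contradiction.
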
 
As an application, we note that Theorem \ref{thm:classification} holds without assumptions on $\mu_S(\mathbb{Q})$ when the polynomials in $S$ have integral coefficients. To see this, combine \cite[Exercise 2.20]{SilvDyn} and \cite[Theorem 4]{4cycle} to deduce that $\mu_S(\mathbb{Q})\leq2$ in this case.   
\begin{corollary}{\label{cor:int}} Let $S=\{x^2+c_1, x^2+c_2, \dots ,x^2+c_s\}$ for some distinct $c_i\in\mathbb{Z}$. If $\#S\geq3$, then there are no points $P\in\mathbb{Q}$ with finite orbit for $S$. 
\end{corollary}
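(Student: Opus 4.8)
The plan is to derive this corollary directly from Theorem~\ref{thm:classification}, the only extra ingredient being that integral coefficients force a stronger period bound than the one hypothesized in the theorem. Concretely, I would first verify that $\mu_S(\mathbb{Q})\leq 2$, so that the standing hypothesis $\mu_S(\mathbb{Q})\leq 3$ of Theorem~\ref{thm:classification} is automatically satisfied, and then read off the conclusion case by case on $\#S$.

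For the period bound, fix any $\phi_i(x)=x^2+c_i$ with $c_i\in\mathbb{Z}$. Since $\phi_i$ is monic with integer coefficients, \cite[Exercise 2.20]{SilvDyn} shows that every rational periodic point of $\phi_i$ is in fact an integer. Now if $P_0\mapsto P_1\mapsto\cdots\mapsto P_{n-1}\mapsto P_0$ is a cycle of integers, the standard divisibility relation $a-b\mid \phi_i(a)-\phi_i(b)$ applied around the cycle forces $|P_{j+1}-P_j|$ to be independent of $j$, and a short parity/telescoping argument then yields $n\leq 2$; this is exactly the content of \cite[Theorem 4]{4cycle}. Hence $\Per_n^{**}(\phi_i,\mathbb{Q})=\varnothing$ for all $n\geq 3$ and all $i$, so $\mu_S(\mathbb{Q})\leq 2\leq 3$.

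With the hypothesis verified, I would apply Theorem~\ref{thm:classification} in its two ranges. If $\#S\geq 4$, part~(2) immediately gives that there are no points $P\in\mathbb{Q}$ with finite orbit for $S$. If $\#S=3$, part~(1) asserts that the only pairs $(S,P)$ admitting a finite orbit are the two explicit families listed there; but in both of those families the constants (such as $-5/16$, $3/16$, $-21/16$) have denominator $16$ and are therefore not integers. Since we are assuming $c_i\in\mathbb{Z}$, neither exceptional set can occur, and so there are again no finite orbit points. Combining the two cases proves the corollary for all $\#S\geq 3$.

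Since this is a corollary, there is no genuine obstacle in the argument; the only point requiring care is the reduction $\mu_S(\mathbb{Q})\leq 2$, and in particular making sure the integrality of periodic points (Exercise 2.20) is correctly paired with the integer-cycle bound so that \emph{all} periods $n\geq 3$ are excluded, not merely a single offending length. The elimination of the $\#S=3$ exceptional sets is then purely a matter of inspecting denominators, which is immediate.
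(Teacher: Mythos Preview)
Your proposal is correct and follows exactly the paper's approach: the paper derives the corollary from Theorem~\ref{thm:classification} by citing \cite[Exercise~2.20]{SilvDyn} and \cite[Theorem~4]{4cycle} to obtain $\mu_S(\mathbb{Q})\leq 2$, and you have reproduced this reduction together with the (implicit in the paper) observation that the two exceptional three-map sets in part~(1) have non-integral constants. If anything, you have spelled out more detail than the paper does.
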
 
\begin{remark} Note that Corollary \ref{cor:int} is sharp. For instance, if $y\in\mathbb{Z}$ satisfies $y\equiv{\pm{1}}\Mod{4}$, then the pair $(S,P)$ given by 
\[S=\Big\{x^2+\frac{1-y^2}{4},\; x^2+\frac{-3-y^2}{4}\Big\}\;\;\text{and}\;\;\; P=\frac{1+y}{2}\]
has finite orbit; see Lemma \ref{lem:12} below.  
\end{remark}
On the other hand, the assumption that $\mu_S(\mathbb{Q})\leq3$ is believed to hold in general. Specifically, Poonen has conjectured that the maximum rational period length of a  quadratic polynomial with rational coefficients is three:   
\begin{conjecture}[Poonen \cite{Poonen}] There are no $c\in\mathbb{Q}$ such that $\phi(x)=x^2+c$ has a $\mathbb{Q}$-rational periodic point with exact period greater than three.  
\end{conjecture}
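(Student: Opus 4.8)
The plan is to translate the problem into the determination of rational points on a sequence of algebraic curves, one for each period $n\geq4$. For $\phi_c(x)=x^2+c$, the points of exact period $n$ are the roots of the $n$-th dynatomic polynomial
\[
\Phi_n(x,c)=\prod_{d\mid n}\bigl(\phi_c^{\,d}(x)-x\bigr)^{\mu(n/d)},
\]
so the affine curve $C_n=\{\Phi_n(x,c)=0\}\subset\mathbb{A}^2$ parametrizes pairs $(c,P)$ with $P\in\Per_n^{**}(\phi_c,\mathbb{Q})$. The map $\sigma\colon(c,x)\mapsto(c,\phi_c(x))$ acts on $C_n$ with order $n$, and a rational $n$-cycle descends to a rational point on the quotient $C_n/\langle\sigma\rangle$ that is neither a cusp (point at infinity) nor a degenerate point where two shorter cycles collide. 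Thus the conjecture is equivalent to the assertion that for every $n\geq4$ the curve $C_n/\langle\sigma\rangle$ has no such \emph{genuine} rational points. First I would compute the genus of each $C_n/\langle\sigma\rangle$ via Riemann--Hurwitz, locating the ramification over the cusps and over the loci of lower-period collisions.

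For the smallest cases this program can be carried out, and I would begin by reproving and organizing those results in a uniform framework. For $n=4$ and $n=5$ the relevant quotient curves have genus $2$, and one determines their rational points by computing the Mordell--Weil group of the Jacobian through a descent and then applying a Chabauty--Coleman argument (supplemented by a Mordell--Weil sieve when the rank is positive); in each case every rational point turns out to be a cusp or a collision point. For $n=6$ the quotient curve has genus $4$, and the same strategy succeeds once the Mordell--Weil rank of its Jacobian is known --- but that rank computation presently rests on the Birch--Swinnerton-Dyer conjecture, so this case is only conditional. The key step throughout is verifying the inequality $\operatorname{rank}J(\mathbb{Q})<g$ that makes Chabauty--Coleman applicable and then certifying that the finitely many rational points produced are all spurious.

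The hard part is everything beyond these small periods. The degree of $\Phi_n$ in $x$ is $\sum_{d\mid n}\mu(n/d)2^d\sim 2^n$, so the genus of $C_n/\langle\sigma\rangle$ grows roughly like $2^{n}/n$; there is no known method that produces the rational points of an infinite family of curves of unbounded, exponentially growing genus. Chabauty--Coleman requires the Jacobian rank to stay below the genus, a condition one cannot verify --- or even reasonably expect --- uniformly in $n$, and the descent and $L$-function computations needed to pin down these ranks become infeasible and are themselves conditional on BSD. In effect, Poonen's conjecture is the quadratic-polynomial instance of the Morton--Silverman uniform boundedness conjecture alluded to in the introduction, and a genuine proof would require a fundamentally uniform input --- a dynamical analogue of Mazur's theorem on torsion --- rather than a curve-by-curve attack. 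I therefore expect the obstacle to be not any single computation but the absence of any mechanism to treat all $n\geq4$ simultaneously; the realistic deliverable is the reduction above together with the unconditional cases $n\in\{4,5\}$, leaving the general statement open.
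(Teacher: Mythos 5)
This statement is presented in the paper as an open conjecture, attributed to Poonen \cite{Poonen}; the paper contains no proof of it and uses it only as a hypothesis (in the form $\mu_S(\mathbb{Q})\leq3$) for Theorem \ref{thm:classification}. So there is no proof to compare yours against, and your proposal correctly refuses to manufacture one: your reduction to rational points on the dynatomic quotient curves $C_n/\langle\sigma\rangle$, your accounting of the unconditional cases $n=4$ \cite{4cycle} and $n=5$ \cite{5cycle}, the conditional case $n=6$ under BSD \cite{6cycle}, and the numerical verification \cite{Hutz-Ingram}, exactly matches the evidence the paper itself assembles in its remark following the conjecture --- including the paper's observation that its main theorem holds unconditionally under the weaker hypothesis $\mu_S(\mathbb{Q})\leq5$. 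Your diagnosis of the obstruction (exponentially growing genus, no uniform mechanism, the analogy with Morton--Silverman uniform boundedness and the need for a dynamical Mazur-type theorem) is the accepted view. One small historical correction: for $n=4$, Morton did not use Chabauty--Coleman; the period-$4$ quotient curve is birational to the modular curve $X_1(16)$ of genus $2$, and the rational points are pinned down via elliptic quotients of Mordell--Weil rank zero. Chabauty--Coleman enters with Flynn--Poonen--Schaefer's genus-$2$ curve for $n=5$ (rank-$1$ Jacobian) and Stoll's genus-$4$ curve for $n=6$ (rank $3$, computed assuming BSD). With that adjustment, your proposal is an accurate statement of the current state of the art, and the conjecture remains open exactly as you conclude.
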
 
\begin{remark} Some evidence for Poonen's conjecture is as follows. It has been verified for all $c\in\mathbb{Q}$ with height at most $10^8$; see \cite{Hutz-Ingram}. Moreover, it is known that there are no rational $c$ values with rational periodic points of exact period $4$ and $5$; see \cite{4cycle} and \cite{5cycle} respectively. Likewise, assuming the BSD conjecture for a single curve of genus $4$, there are no rational points of exact period $6$; see \cite{6cycle}. In particular, Theorem \ref{thm:classification} remains true unconditionally with the hypothesis $\mu_S(\mathbb{Q})\leq5$.    
\end{remark}
In particular, Poonen's Conjecture and Theorem \ref{thm:classification} together imply that there are no rational points with finite orbit whenever $S$ contains at least four quadratic polynomials: 
\begin{conjecture}{\label{conj1}} Let $S=\{x^2+c_1, x^2+c_2, \dots ,x^2+c_s\}$ for some distinct $c_i\in\mathbb{Q}$. If $\#S\geq4$, then there are no points $P\in\mathbb{Q}$ with finite orbit for $S$. 
\end{conjecture}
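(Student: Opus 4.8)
The plan is to recast the problem as one about finite forward-invariant sets and then exploit the rigidity of \emph{common} preperiodic points of the maps $x^2+c_i$. First I would record the equivalence that governs everything: for a basepoint $P\in\mathbb{Q}$, the orbit $\Orb_S(P)$ is finite if and only if $\Orb_S(P)\subseteq \PrePer(\phi_i,\mathbb{Q})$ for every $\phi_i\in S$. Indeed, a finite orbit is forward-invariant under each $\phi_i$, so every point it contains is $\phi_i$-preperiodic; conversely, each set $\PrePer(\phi_i,\mathbb{Q})$ is finite by Northcott's theorem \cite{SilvDyn}, so an orbit contained in the finite intersection $\bigcap_i\PrePer(\phi_i,\mathbb{Q})$ is automatically finite. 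Thus classifying finite-orbit pairs $(S,P)$ reduces to understanding finite sets $V\subset\mathbb{Q}$ with $\phi_i(V)\subseteq V$ for all $i$, and in particular to understanding which rationals are simultaneously preperiodic for several distinct maps $x^2+c_i$.

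The crucial structural input is the identity
\[\phi_i(Q)-\phi_j(Q)=c_i-c_j\qquad\text{for all }Q\in\mathbb{Q},\ 1\le i,j\le s,\]
which is independent of $Q$. Consequently, for every $Q\in V$ the set $V$ contains the translate $Q^2+\{c_1,\dots,c_s\}$, a shifted copy of the fixed $s$-element set $C=\{c_1,\dots,c_s\}$; since the $c_i$ are distinct this already forces $\#V\ge s$. I would push this in two complementary directions. \emph{Archimedean:} letting $\beta=\max V$, forward-invariance gives $\beta^2+c_i\le\beta$ for every $i$, hence $\max_i c_i\le \tfrac14$ and $\beta\le \tfrac12\bigl(1+\sqrt{1-4\max_i c_i}\bigr)$, with $\beta$ becoming a rational fixed point of the map of largest constant in the extremal case; applying the same idea to the element $q\in V$ of smallest square shows that $V$ also contains an explicit translate $q^2+C$ sitting near $\min V$. \emph{Non-archimedean:} the standard denominator analysis for $x^2+c$ (as in Poonen \cite{Poonen}) shows that the existence of a rational preperiodic point forces the denominator of each $c_i$ to be a perfect square and bounds the denominators of all points of $V$. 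Together these confine $V$ to an explicit, bounded set of rationals with controlled denominators.

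With $V$ trapped in such a finite window, I would then invoke Poonen's classification \cite{Poonen} of the rational preperiodic portraits of $x^2+c$ under the hypothesis $\mu_S(\mathbb{Q})\le3$, i.e. no rational cycle of length exceeding three. Each admissible portrait lists the fixed points, $2$-cycles, $3$-cycles, and their rational preimages as explicit functions of $c$; feeding these through the difference identity $\phi_i(Q)-\phi_j(Q)=c_i-c_j$ converts the condition ``$Q\in\PrePer(\phi_i,\mathbb{Q})\cap\PrePer(\phi_j,\mathbb{Q})$'' into a small system of Diophantine constraints on the pair $(c_i,c_j)$. Solving these pairwise, and then testing forward-invariance of the surviving candidate sets, should on the one hand single out exactly the two exceptional triples of part (1), and on the other hand show that no family of four or more maps can share a nonempty forward-invariant set, yielding part (2).

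I expect the main obstacle to be precisely this final matching-and-elimination step: organizing the casework over Poonen's portraits so that the pairwise compatibility conditions are solved \emph{completely}, and in particular proving that the overlap constraints become inconsistent once $s\ge4$. The difference identity is the linchpin that makes the arithmetic tractable, since it rigidly ties the positions of the shared preperiodic points across different maps; the delicate part is ruling out the numerous a priori ways in which three or more portraits might interlock, and confirming that the only surviving $s=3$ configurations are the two families recorded in the theorem.
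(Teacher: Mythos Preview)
The first point to make is that the statement you are addressing is a \emph{Conjecture} in the paper, not a theorem: the paper does not prove it. What the paper proves is Theorem~\ref{thm:classification}, which carries the extra hypothesis $\mu_S(\mathbb{Q})\le 3$; Conjecture~\ref{conj1} is then asserted only conditionally, as a consequence of Poonen's Conjecture together with Theorem~\ref{thm:classification}. Your proposal does the same thing: midway through you ``invoke Poonen's classification \ldots\ under the hypothesis $\mu_S(\mathbb{Q})\le3$,'' so at best you are sketching a proof of Theorem~\ref{thm:classification}(2), not of the conjecture itself. Without that hypothesis neither you nor the paper has a route.

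Regarded as an outline for Theorem~\ref{thm:classification}, your plan is in the same spirit as the paper's but leaves the hard step entirely unexecuted. The reformulation via forward-invariant sets, the identity $\phi_i(Q)-\phi_j(Q)=c_i-c_j$, and the archimedean and $p$-adic bounds you sketch are all correct, but they are already subsumed in Poonen's portrait classification and do not reduce the casework. The substance of the argument is precisely the ``matching-and-elimination step'' you flag at the end as the expected obstacle, and here the paper does something concrete that your proposal does not: it writes down explicit two-variable polynomial relations such as $f_i^4(Q)-f_i^2(Q)=0$ forced by the portraits, eliminates via Gr\"obner bases in {\tt Magma} to obtain six classification lemmas for pairs (Lemmas~\ref{lem:11}--\ref{lem:23}), and then runs through roughly forty subcases for triples, one of which requires determining the rational points on a rank-zero elliptic curve. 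Your proposal supplies no mechanism for this elimination and no argument that the pairwise Diophantine systems yield to hand computation; so while your roadmap is accurate, the gap between it and a proof is essentially the entire computational content of the paper.
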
 
Moreover, as with Poonen's conjecture and preperiodic points in general, we can rephrase the results and conjectures above in terms of height functions. Namely, for a fixed map $\phi$, it is well known that there is a canonical height function $\hat{h}_\phi:\mathbb{P}^1\rightarrow\mathbb{R}$ whose kernel is exactly the points of finite orbit for $\phi$; see \cite[Theorem 3.22]{SilvDyn}. Similarly, if $S$ is a height controlled collection of maps and $\nu$ is any positive probability measure on $S$ (see \cite[\S1.1]{VOHWH}), then the author et. al. showed that there exists a height function $\mathbb{E}_\nu[\hat{h}]:\mathbb{P}^1\rightarrow \mathbb{R}$ whose kernel is exactly the set of finite orbit points for $S$ (as defined above); see \cite[Corollary 1.4]{VOHWH}. In particular, such a height function exists \cite{Kawaguchi} for any finite collection of maps $S$ and any positive probability measure $\nu$ on $S$; here positive means that $\nu(\phi)>0$ for all $\phi\in S$. Hence, we can restate Theorem \ref{thm:classification} in terms of heights as follows: 
\begin{theorem}\label{thm:hts} Let $S=\{x^2+c_1, x^2+c_2, \dots ,x^2+c_s\}$ for some distinct $c_i\in\mathbb{Q}$ and let $\nu$ be any positive probability measure on $S$. If $\mu_S(\mathbb{Q})\leq3$ and $\#S\geq4$, then
\[\big\{P\in\mathbb{P}^1(\mathbb{Q})\,: \mathbb{E}_\nu[\hat{h}](P)=0\big\}=\{\infty\}.\]
Equivalently, there are no affine rational points of expected height zero for $S$. 
\end{theorem}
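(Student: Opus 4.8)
The plan is to deduce Theorem~\ref{thm:hts} directly from Theorem~\ref{thm:classification}(2) together with the height-theoretic characterization of finite orbit points recalled above; the statement is in essence a translation of the earlier result into the language of heights, so the proof should be short. First I would invoke \cite[Corollary 1.4]{VOHWH}, which identifies the kernel $\big\{P\in\mathbb{P}^1(\mathbb{Q}): \mathbb{E}_\nu[\hat{h}](P)=0\big\}$ with the set of points of finite orbit for $S$. Since $S$ is a finite (hence height-controlled) collection of maps and $\nu$ is positive, such a height function exists by \cite{Kawaguchi} and this identification holds; in particular the resulting kernel is independent of the choice of $\nu$, which explains why the conclusion may be stated for any positive probability measure.

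Next I would verify that the point $\infty$ lies in the kernel. Each map $x^2+c_i$ is a polynomial of degree two, and therefore fixes the point at infinity; consequently $\infty$ is a common fixed point of every element of the monoid $M_S$. It follows that $\Orb_S(\infty)=\{\infty\}$ is finite, so $\mathbb{E}_\nu[\hat{h}](\infty)=0$ and $\infty$ belongs to the kernel.

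Finally, I would apply Theorem~\ref{thm:classification}(2): under the standing hypotheses $\mu_S(\mathbb{Q})\leq 3$ and $\#S\geq 4$, there are no affine rational points $P\in\mathbb{Q}$ with finite orbit for $S$. Combining the three observations, the set of finite orbit points for $S$ in $\mathbb{P}^1(\mathbb{Q})$ is exactly $\{\infty\}$, and by the kernel identification this is precisely $\big\{P\in\mathbb{P}^1(\mathbb{Q}): \mathbb{E}_\nu[\hat{h}](P)=0\big\}$, as claimed. The concluding sentence—that there are no affine rational points of expected height zero—is then merely a restatement.

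There is no genuine obstacle in this argument, since all of the analytic content (the existence of $\mathbb{E}_\nu[\hat{h}]$ and the identification of its kernel with the finite orbit locus) is imported wholesale from \cite{VOHWH} and \cite{Kawaguchi}, while all of the arithmetic content is Theorem~\ref{thm:classification}. The only point demanding attention is the bookkeeping at $\infty$: one must confirm that $\infty$, rather than some affine rational point, is the unique element of the kernel. This is immediate from the polynomial structure of the maps in $S$, which forces $\infty$ to be totally fixed while Theorem~\ref{thm:classification}(2) rules out any affine survivor.
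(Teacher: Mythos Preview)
Your proposal is correct and matches the paper's approach exactly: the paper presents Theorem~\ref{thm:hts} explicitly as a restatement of Theorem~\ref{thm:classification} in the language of heights, justified by the kernel identification from \cite[Corollary 1.4]{VOHWH} and \cite{Kawaguchi}, without giving a separate formal proof. Your write-up simply makes the implicit steps (especially the bookkeeping at $\infty$) explicit.
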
 
Moreover, we can similarly characterize Corollary \ref{cor:int} and Conjecture \ref{conj1} in terms of heights: replace ``points $P$ with finite orbit for $S$" with ``points $P$ with $\mathbb{E}_\nu[\hat{h}](P)=0$." Finally, with Theorem \ref{thm:classification}, Conjecture \ref{conj1}, and \cite[Conjecture 3.15]{SilvDyn} in mind, we make the following conjecture, which may be viewed as an analog of the Morton-Silverman Conjecture \cite{Morton-Silverman} (concerning finite orbit points for a single map) for sets of maps. 
\begin{conjecture}\label{conj:me} Let $S=\{\phi_1, \phi_2, \dots,\phi_s\}$ be a set of rational maps defined over a number field $K$ such that $2\leq\deg(\phi)\leq d$ for all $\phi\in S$. Then there exist constants $B=B(K,d)$ and $C=C(K,d)$, depending only on $K$ and $d$, such that if $\#S\geq C$, then there are at most $B$ points $P\in \mathbb{P}^1(K)$ of finite orbit for $S$. Equivalently, if $\nu$ is any positive probability measure on $S$ and $\#S\geq C$, then 
 \begin{equation}\label{bdexphtzero}
 \#\big\{P\in\mathbb{P}^1(K)\,: \mathbb{E}_\nu[\hat{h}](P)=0\big\}\leq B.
 \end{equation} 
That is, there are at most $B$ points of expected height zero for $S$. 
\end{conjecture}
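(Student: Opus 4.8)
The plan is to disentangle what is forced by a single map from what is gained by having many maps. The first step is the elementary observation that finite orbit for $S$ is far more restrictive than finite orbit for any one $\phi_i$: since $M_S$ contains every power $\phi_i^n$, the forward orbit $\{\phi_i^n(P)\}_{n\geq1}$ lies inside $\Orb_S(P)$, so if the latter is finite then $P\in\PrePer(\phi_i,K)$ for each $i$. Hence
\[
\big\{P\in\mathbb{P}^1(K): P\text{ has finite orbit for }S\big\}\subseteq\bigcap_{i=1}^{s}\PrePer(\phi_i,K).
\]
This reframing already gives a conditional proof of the conjecture: granting the Morton--Silverman uniform boundedness conjecture \cite{Morton-Silverman}, each $\PrePer(\phi_i,K)$ has at most $B_0=B_0(K,d)$ points, so the intersection does too, and one may take $B=B_0$ with $C=1$. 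The genuine content of Conjecture \ref{conj:me} is therefore to obtain such a bound \emph{unconditionally}, using the hypothesis $\#S\geq C$ to compensate for the absence of a single-map bound, and to do so with $B$ far smaller than $B_0$ --- indeed with $B=0$ in the quadratic-polynomial regime of Theorem \ref{thm:classification}.

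For the unconditional program I would try to axiomatize the mechanism behind Theorem \ref{thm:classification} and Corollary \ref{cor:int}. There, a common finite orbit point must simultaneously respect the rigid small-period structure of several maps at once, and the height $\mathbb{E}_\nu[\hat{h}]$ together with the classification of low rational periods forces the common locus to be empty as soon as $s\geq4$. In the general degree-$\leq d$ setting the target is an unlikely-intersections statement: if the common preperiodic locus $\bigcap_i\PrePer(\phi_i,K)$ exceeds $B$ points for a set of more than $C=C(K,d)$ maps, then the maps $\phi_i$ are forced into a low-dimensional family (sharing, for instance, a common canonical height and hence a common equilibrium measure), a configuration that should be impossible once one has enough genuinely distinct maps. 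Establishing this would make multiplicity, rather than a per-map bound, the source of the uniform estimate.

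The main obstacle is precisely this uniformity, and it is of the same nature as the one that keeps Morton--Silverman open: bounding a single preperiodic set independently of the map requires either a uniform boundedness input or a substitute that exploits having many maps. I therefore expect the hardest step to be proving that the common preperiodic locus of $C=C(K,d)$ sufficiently independent degree-$\leq d$ maps is uniformly small \emph{without} first bounding the preperiodic points of any one of them. A sensible sequence of intermediate goals is to extend the height-and-period arguments of Theorem \ref{thm:classification} from $x^2+c$ to the unicritical family $x^d+c$, then to polynomials of bounded degree, and only afterwards to attempt the full rational-function case; each stage should sharpen the expected value of $C(K,d)$ and test whether $B$ can always be taken to be $0$.
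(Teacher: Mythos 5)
The statement is an open conjecture in the paper, not a theorem: the paper offers no proof, only the remark that Conjecture \ref{conj:me} follows from \cite[Conjecture 3.15]{SilvDyn} by taking $B$ to be the conjectured uniform bound on $\#\PrePer(\phi,K)$ for a single map of degree at most $d$ and setting $C=1$. Your conditional reduction --- observing that $M_S$ contains all iterates $\phi_i^n$, so any finite orbit point lies in $\bigcap_{i=1}^{s}\PrePer(\phi_i,K)$, and then invoking uniform boundedness --- is exactly the paper's remark, and the rest of your proposal is correctly framed as a research program rather than a proof, consistent with the statement's status as a conjecture.
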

\begin{remark} It is worth pointing out that Conjecture \ref{conj:me} follows from \cite[Conjecture 3.15]{SilvDyn}: simply use for $B$ the uniform bound on the number of preperiodic points for a single map in $S$ provided by \cite[Conjecture 3.15]{SilvDyn} and set $C=1$. However, in the author's opinion, Conjecture \ref{conj:me} is more tractable - there are more (at the very least, different) tools at one's disposal when one is allowed to add maps to the monoid $M_S$; see the proofs of the classification results for two maps with finite orbit points in Section \ref{sec:twomaps}.   
\end{remark} 
On the other hand, it follows from \cite[Corollary 1.3]{Baker-DeMarco} that there are finitely many algebraic points of finite orbit for $S$ whenever the maps in $S$ have distinct Julia sets, subject of course to some embedding of the ground field $K$ into the complex numbers; see \cite{SilvDyn} for the relevant background material on Julia sets. In particular, it would be interesting to know if a version of Conjecture \ref{conj:me}, specifically the bound in (\ref{bdexphtzero}), holds for the number of $\bar{\mathbb{Q}}$-finite orbit points for $S$, assuming that the maps in $S$ have distinct Julia sets and bounded degree, and that the cardinality of $S$ is sufficiently large. However, we have not thought about this problem enough to warrant any formal statement or conjecture.

Finally, for those readers interested in additional work on arithmetic aspects of monoid dynamics, see \cite{monoid1, monoid2, monoid3}. We now proceed with the proof of Theorem \ref{thm:classification}. In particular, we rely heavily upon the computer algebra system {\tt{Magma}} throughout. Our computations and codes can be found at: \vspace{.05cm}  
\[\boxed{{\tt{https://sites.google.com/a/alumni.brown.edu/whindes/research}}}\vspace{.05cm}\]  
\textbf{Acknowledgements:} It is a pleasure to thank Vivian Olsiewski Healey, Trevor Hyde, Giacomo Micheli, and Joseph Silverman for discussions related to the work in this paper. 
\section{Pairs of maps with finite orbit points}\label{sec:twomaps} 
We begin with several classification results for pairs of quadratic polynomials $S=\{f_1,f_2\}$ with a point of finite orbit. The main idea throughout is to compute a Gr\"{o}bner basis for the ideals in $\mathbb{Q}[c_1,c_2]$ generated by several polynomials of the form $f_1^{n_1}(f_2^{n_2}(P))-f_1^{n_3}(f_2^{n_2}(P))$; here the iterates $n_i$ are chosen according to the classification results in \cite{Poonen}. As we will see, $P$ is (usually) a polynomial in $\mathbb{Q}[c_1,c_2]$ also chosen according to the main results in \cite{Poonen}. 
\begin{remark} For the {\tt{Magma}} codes verifying the calculations in this section, see the file named \emph{Two maps with points of finite orbit} at the website mentioned in the introduction.  
\end{remark} 
We begin with a classification result for sets of quadratic polynomials $S=\{f_1,f_2\}$ possessing a specified finite orbit point when both $f_1$ and $f_2$ have rational fixed points.      
\begin{lemma}{\label{lem:11}} Let $S=\{f_1, f_2\}$ with $f_1=x^2+c_1$ and $f_2=x^2+c_2$ for some distinct $c_1,c_2\in\mathbb{Q}$. If $S$ has the following properties:    
\begin{enumerate}[topsep=7pt, partopsep=7pt, itemsep=8pt]
\item[\textup{(1)}] $\mu_S(\mathbb{Q})\leq3$,  
\item[\textup{(2)}] $f_1$ and $f_2$ both have rational fixed points,
\item[\textup{(3)}] a fixed point $P\in\mathbb{Q}$ of $f_1$ has finite orbit for $S$, 
\end{enumerate}  
then at least one of the following statements holds:  \begin{enumerate}[topsep=7pt, partopsep=7pt, itemsep=8pt]
\item[\textup{(a)}] $(c_1,c_2,P)=\Big(\frac{1-y^2}{4},\,\frac{1-(y+2)^2}{4},\, \frac{1+y}{2}\Big)$ for some $y\in\mathbb{Q}$,  
\item[\textup{(b)}] $(c_1,c_2,P)=\Big(\frac{t^4 - 18t^2 + 1}{4(t^2 -1)^2},\, \frac{-3t^4 - 10t^2 -3}{4(t^2 -1)^2},\, \frac{t^2 + 4t - 1}{2(t^2 - 1)}\Big)$ for some $t\in\mathbb{Q}$,
\item[\textup{(c)}] $(c_1,c_2)\in\Big\{\big(-\frac{21}{16},-\frac{5}{16}\big), \big(\frac{3}{16},\frac{-5}{16}\big)\Big\}$. 
\end{enumerate}   
\end{lemma}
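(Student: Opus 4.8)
The plan is to reduce the finite-orbit hypothesis to a polynomial system in the parameters that cut out the rational fixed points of $f_1$ and $f_2$, and then to solve that system with a Gr\"{o}bner basis computation. First I would use the two fixed-point hypotheses to parametrize. Since $P$ is a rational fixed point of $f_1$, the equation $P^2-P+c_1=0$ forces $1-4c_1$ to be a rational square; writing $1-4c_1=y^2$ gives $c_1=\tfrac{1-y^2}{4}$ and, after replacing $y$ by $-y$ if necessary, $P=\tfrac{1+y}{2}$. Hypothesis (2) applied to $f_2$ gives in the same way $c_2=\tfrac{1-w^2}{4}$ for some $w\in\mathbb{Q}$. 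The structural observation that drives everything is that $f_2(R)=f_1(R)+\delta$ with $\delta:=c_2-c_1$; combined with the fixed-point relation $P^2=P-c_1$ this yields the clean identity $f_2(P)=P+\delta$.

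Next I would extract the necessary collision conditions. Because $\Orb_S(P)$ is finite and forward invariant under each map, every pure iterate $f_2^{\,n}(P)$ lies in a finite $f_1$-invariant set and is therefore preperiodic for $f_1$, while $P$ itself is preperiodic for $f_2$. Since $\mu_S(\mathbb{Q})\le 3$, Poonen's classification of the rational preperiodic portraits of a quadratic polynomial bounds both the eventual period ($\le 3$) and the tail length, so preperiodicity of $f_2^{\,n_2}(P)$ under $f_1$ is equivalent to one of an explicit finite list of collisions $f_1^{n_1}(f_2^{n_2}(P))=f_1^{n_3}(f_2^{n_2}(P))$. Substituting the parametrizations above, each such collision becomes a polynomial equation in $y$ and $w$, to which I would adjoin the distinctness constraint $\delta\neq 0$. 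The two shallowest positions are already visible by hand: the point $f_2(P)=P+\delta$ is the nontrivial $f_1$-preimage $-P$ of the fixed point $P$ exactly when $\delta=-2P=-(1+y)$, forcing $w=\pm(y+2)$, which is family (a); and it is the nontrivial preimage $-(1-P)$ of the other fixed point $1-P$ exactly when $\delta=-1$, forcing the conic $w^2=y^2+4$, whose rational parametrization $y=\tfrac{4t}{t^2-1}$, $w=\tfrac{2(t^2+1)}{t^2-1}$ is precisely family (b). The remaining portrait positions---deeper tails and landings in genuine $2$- and $3$-cycles of $f_1$---over-determine the system and should contribute only finitely many rational solutions, namely family (c).

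I would then feed the full ideal generated by these collision polynomials to \texttt{Magma}, compute a Gr\"{o}bner basis, and decompose the resulting variety, reading off its components: the two one-parameter families (a) and (b) and the isolated points (c). Finally, on each component I would confirm that $P$ genuinely has finite orbit by exhibiting the small forward-closed orbit explicitly---for instance $\{P,-P\}$ in case (a) and $\{P,\,P-1,\,1-P,\,-P\}$ in case (b)---and likewise for the sporadic pairs in (c). The main obstacle is the bookkeeping in the middle step: one must enumerate, from Poonen's portrait list, precisely which collisions to impose so that their common zero locus is exactly $\mathrm{(a)}\cup\mathrm{(b)}\cup\mathrm{(c)}$ with no finite-orbit pair omitted, and then correctly decompose a potentially large Gr\"{o}bner basis---in particular recognizing the conic $w^2=y^2+4$ that produces the clean $t$-parametrization of family (b) and separating the one-parameter families from the finitely many exceptional solutions of (c).
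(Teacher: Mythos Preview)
Your plan is essentially the paper's proof: parametrize $c_1,c_2,P$ by the fixed-point conditions, translate finite orbit into Poonen-style collision relations in the two parameters, and solve by a Gr\"{o}bner basis computation that isolates the families (a), (b), and the sporadic set (c). Two points of comparison are worth recording. First, the paper streamlines your ``explicit finite list of collisions'' to just \emph{two} generators: since $f_1$ and $f_2$ already have rational fixed points and $\mu_S(\mathbb{Q})\le 3$, Poonen's classification rules out rational $3$-cycles for either map, so every rational preperiodic point satisfies $f_i^4=f_i^2$; the paper therefore takes only $f_2^4(P)-f_2^2(P)$ and $f_1^4(Q)-f_1^2(Q)$ with $Q=f_2^2(P)$ as generators (your mention of ``landings in genuine $3$-cycles of $f_1$'' is thus moot). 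Second, the resulting Gr\"{o}bner basis element factors not only into the pieces you anticipate but also into an extra conic $y^2+4y-w^2+8=0$, giving a spurious one-parameter component that satisfies the two imposed relations yet does \emph{not} yield finite-orbit tuples; the paper kills it by imposing one further preperiodicity constraint on that component. Your verification step would catch this, but your expectation that the ``remaining portrait positions'' contribute only zero-dimensional pieces is slightly optimistic. Your by-hand identification of (a) and (b) via $f_2(P)=P+\delta$ landing at $-P$ or $-(1-P)$ is a nice touch not made explicit in the paper.
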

\begin{proof} We note first that if $f=x^2+c$ for some $c\in\mathbb{Q}$ and $f$ has a rational fixed-point $P\in\mathbb{Q}$, then 
$(c,P)=(\frac{1-y^2}{4},\frac{1+y}{2})$ for some $y\in\mathbb{Q}$. This follows easily from an examination of the quadratic equation $0=f(x)-x=x^2-x+c$ and determining when this equation has a rational root $P$, i.e., when the discriminant of $x^2-x+c$ is a rational square. In particular, we may assume that: 
\[c_1=\frac{1-y^2}{4},\; P=\frac{1+y}{2},\; c_2=\frac{1-z^2}{4}\;\;\; \text{for $y,z\in\mathbb{Q}$}.\]
We need not keep track of the fixed point of $f_2$, since we make no assumptions about the action of $S$ on it. On the other hand, since $P$ is assumed to have finite orbit for $S$, it must be the case that $P$ is preperiodic for $f_2$ and $Q:=f_2^2(P)$ is preperiodic for $f_1$. In particular, since $\mu_S(\mathbb{Q})\leq3$ and both $f_1$ and $f_2$ have rational fixed points, Theorems 1, 2, and 3 of \cite{Poonen} together imply that 
\begin{equation}{\label{idealgen11}}
f_2^4(P)-f_2^2(P)=0 \;\;\text{and}\;\; f_1^4(Q)-f_1^2(Q)=0;
\end{equation} 
the key point here is that the classification theorems in \cite{Poonen} imply that all rational preperiodic points (of maps with rational fixed points) enter a fixed point or $2$-cycle after two iterates, assuming $\mu_S(\mathbb{Q})\leq3$. 

We now view the rational numbers $y,z\in\mathbb{Q}$ as specializations of the variables \textbf{y} and \textbf{z}, and let $F_1(\textbf{y},\textbf{z})=f_{2,\textbf{z}}^4(P_{\textbf{y}})-f_{2,\textbf{z}}^2(P_{\textbf{y}})$ and $F_2(\textbf{y},\textbf{z})=f_{1,\textbf{y}}^4(Q_{\textbf{y},\textbf{z}})-f_{1,\textbf{y}}^2(Q_{\textbf{y},\textbf{z}})$ be polynomials in $\mathbb{Q}[\textbf{y},\textbf{z}]$; here we use subscripts to emphasize on what variables the various functions depend (in the proof of Lemma \ref{lem:11} only). Now, we compute with {\tt{Magma}} that the polynomial  
\[F(\textbf{y},\textbf{z}):=G_{\textbf{z}}(\textbf{y}-\textbf{z})(\textbf{y} + \textbf{z})(\textbf{y} - \textbf{z} + 2)(\textbf{y} + \textbf{z} + 2)(\textbf{y}^2 - \textbf{z}^2 + 4)(\textbf{y}^2 + 4\textbf{y} - \textbf{z}^2 + 8)\]
is in the $\mathbb{Q}[\textbf{y},\textbf{z}]$-ideal generated by $F_1(\textbf{y},\textbf{z})$ and $F_2(\textbf{y},\textbf{z})$ (by computing a Gr\"{o}bner basis for this ideal); here $G_{\textbf{z}}\in\mathbb{Q}[\textbf{z}]$ is a polynomial in the variable $\textbf{z}$ alone of degree $28$. In particular, it follows from (\ref{idealgen11}) that a rational pair $(y,z)\in\mathbb{Q}^2$ associated to a finite orbit tuple $(c_1,c_2,P)$ as in Lemma \ref{lem:11} must satisfy $F(y,z)=0$. On the other hand, we compute with {\tt{Magama}} that the only rational roots of $G_{\textbf{z}}$ are $z=$$\pm{1},\pm{2},\pm{3/2}$. Therefore, such a pair $(y,z)\in\mathbb{Q}^2$ must satisfy:  
\[z\in\Big\{ \pm{1},\pm{2},\pm{3/2}\Big\},\;\;\; z=\pm{y},\;\;z=\pm(y+2),\;\;\; y^2-z^2=-4,\;\;\text{or}\;\;\; y^2+4y-z^2=-8.\vspace{.15cm}\] 
However, note that if $z=\pm y$, then $c_1=c_2$, an immediate contradiction. We proceed to evaluate the other possibilities. Suppose first that $y=\pm{(y+2)}$. Then, in either case, the tuple $(c_1,c_2,P)$ takes the form: 
\begin{equation}{\label{finiteorbitpair1}} 
(c_1,c_2,P)=\Big(\frac{1-y^2}{4},\,\frac{1-(y+2)^2}{4},\, \frac{1+y}{2}\Big)\;\;\;\text{for $y\in\mathbb{Q}$}.
\end{equation}  
Moreover, it is straightforward to check that the set $\{P,-P, f_2(P), f_1(f_2(P))\}$ is stable under the action of $S$. In particular, $(c_1,c_2,P)$ is a finite orbit tuple in this case. 

Suppose next that $y^2-z^2=-4$. Since, $y^2-z^2=-4$ is a rational curve with a rational point $(0,2)$, we may parametrize its rational points via 
\[(y,z)=\Big(\frac{4t}{t^2 - 1},\frac{2t^2 + 2}{t^2 - 1}\Big)\;\;\; \text{for $t\in\mathbb{Q}$}.\] 
In particular, with this parametrization the tuple $(c_1,c_2,P)$ takes the form:\vspace{.1cm}  
\[ (c_1,c_2,P)=\bigg(\frac{t^4 - 18t^2 + 1}{4(t^2 -1)^2},\, \frac{-3t^4 - 10t^2 -3}{4(t^2 -1)^2},\, \frac{t^2 + 4t - 1}{2(t^2 - 1)}\bigg)\;\;\; \text{for $t\in\mathbb{Q}$}.\vspace{.1cm}\]             
Moreover, we compute with {\tt{Magma}} that the finite set $\big\{P,-P, f_2(P),f_1(f_2(P))\big\}$ is stable under the action of $S$. In particular, $(c_1,c_2,P)$ is a finite orbit tuple in this case.  

Suppose next that $y^2+4y-z^2=-8$. Since, $y^2+4y-z^2=-8$ is a rational curve with a rational point $(-2,2)$, we may parametrize its rational points via 
\[(y,z)=\Big(\frac{-2t^2 + 4t + 2}{t^2 - 1},\frac{2t^2 + 2}{t^2 - 1}\Big)\;\;\; \text{for $t\in\mathbb{Q}$}.\] 
In particular, with this parametrization the tuple $(c_1,c_2,P)$ takes the form: \vspace{.1cm}  
\[ (c_1,c_2,P)=\bigg(\frac{-3t^4 + 16t^3 - 10t^2 - 16t - 3}{4(t^2 -1)^2},\, \frac{-3t^4 - 10t^2 -3}{4(t^2 -1)^2},\, \frac{-t^2 + 4t + 1}{2(t^2 - 1)}\bigg)\;\;\; \text{for $t\in\mathbb{Q}$}.\vspace{.1cm}\]   
However, since $P$ has finite orbit for $S$, so does $W:=f_2(P)$. In particular, $W$ must be preperiodic for $f_1$. Therefore, Theorems 1, 2, and 3 of \cite{Poonen} together imply that $f_1^4(W)-f_1^2(W)=0$. However, this expression is in $t$ alone, and we check with {\tt{Magma}} that $t=0$ is the only rational solution of this equation. Therefore, we may assume that $t=0$. However, in this case $c_1=-3/4=c_2$, a contradiction. 

Finally, suppose that $z\in\big\{ \pm{1},\pm{2},\pm{3/2}\big\}$ and let $I$ be the $\mathbb{Q}[\textbf{y},\textbf{z}]$-ideal generated by $F_1(\textbf{y},\textbf{z})$ and $F_2(\textbf{y},\textbf{z})$. We compute a Gr\"{o}bner basis $\{B_1(\textbf{y},\textbf{z}),B_2(\textbf{y},\textbf{z}),B_3(\textbf{y},\textbf{z})\}$ for $I$ with {\tt{Magma}}; in fact, the function $F(\textbf{y},\textbf{z})$ above is $B_3(\textbf{y},\textbf{z})$. In particular, since every element of $I$ must vanish at $(z,y)$, a pair of rational numbers associated to a finite orbit tuple tuple $(c_1,c_2,P)$ as in Lemma \ref{lem:11}, the polynomials $B_1(\textbf{y},z)\in\mathbb{Q}[\textbf{y}]$ for $z\in\big\{\pm{1},\pm{2},\pm{3/2}\big\}$ must vanish at their corresponding $y$ value. However, we compute with {\tt{Magma}} the possible rational roots of these six polynomials and deduce that 
\[z\in\big\{ \pm{1},\pm{2},\pm{3/2}\big\}\;\; \text{implies} \;\; y\in\big\{0,\pm{1},\pm{2},\pm{1/2},\pm{3/2},-3,-4,-7/2,-5/2\big\}.\] 
On the other hand, $B_2(\textbf{y},\textbf{z})$ and $B_3(\textbf{y},\textbf{z})$ must also vanish at the pairs above, and among the pairs where this is so, all but those associated to $(c_1,c_2)=\big(-\frac{21}{16},-\frac{5}{16}\big)$ and $(c_1,c_2)=\big(\frac{3}{16},-\frac{5}{16}\big)$ are accounted for in the finite orbit family, $(c_1,c_2,P)=\big(\frac{1+y^2}{4},\frac{1-(y+2)^2}{4}, \frac{1+y}{2}\big)$, analyzed in (\ref{finiteorbitpair1}) above. This completes the proof of Lemma \ref{lem:11}.        
\end{proof} 
We next prove a classification result for sets of quadratic polynomials $S=\{f_1,f_2\}$ possessing a specified finite orbit point when $f_1$ has a rational fixed point and $f_2$ has a rational point of period two.
\begin{lemma}{\label{lem:12}} Let $S=\{f_1, f_2\}$ with $f_1=x^2+c_1$ and $f_2=x^2+c_2$ for some distinct $c_1,c_2\in\mathbb{Q}$. If $S$ has the following properties:    
\begin{enumerate}[topsep=7pt, partopsep=7pt, itemsep=8pt]
\item[\textup{(1)}] $\mu_S(\mathbb{Q})\leq3$,  
\item[\textup{(2)}] $f_1$ has a rational fixed point and $f_2$ has a rational point of period two,  
\item[\textup{(3)}] a fixed point $P\in\mathbb{Q}$ of $f_1$ has finite orbit for $S$, 
\end{enumerate}  
then at least one of the following statements holds:  \begin{enumerate}[topsep=7pt, partopsep=7pt, itemsep=8pt]
\item[\textup{(a)}] $(c_1,c_2,P)=\Big(\frac{1-y^2}{4},\,\frac{-3-y^2}{4},\, \frac{1+y}{2}\Big)$ for some $y\in\mathbb{Q}$,  
\item[\textup{(b)}] $(c_1,c_2,P)=\Big(\frac{-15t^4 - 2t^2 + 1}{4(t^2 -1)^2},\, \frac{-3t^4 - 10t^2 -3}{4(t^2 -1)^2},\, \frac{-3t^2- 1}{2(t^2 - 1)}\Big)$ for some $t\in\mathbb{Q}$,
\item[\textup{(c)}] $(c_1,c_2)\in\Big\{\big(-\frac{5}{16},-\frac{13}{16}\big),\;\big(-\frac{21}{16},-\frac{13}{16}\big) \Big\}$. 
\end{enumerate}   
\end{lemma}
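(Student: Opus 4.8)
The plan is to mirror the structure of the proof of Lemma \ref{lem:11} almost verbatim, changing only the dynamical constraint imposed by $f_2$. First I would record the two parametrizations forced by hypotheses (2) and (3). As in the previous lemma, a map $x^2+c$ with a rational fixed point satisfies $(c,P)=\big(\frac{1-y^2}{4},\frac{1+y}{2}\big)$ for some $y\in\mathbb{Q}$, so I set $c_1=\frac{1-y^2}{4}$ and $P=\frac{1+y}{2}$. The new ingredient is that $f_2$ now has a rational \emph{$2$-cycle} rather than a fixed point: the two-periodic points of $x^2+c$ are rational exactly when $-(4c+3)$ is a rational square (they satisfy $x^2+x+c+1=0$), so I would write $c_2=\frac{-3-z^2}{4}$ for some $z\in\mathbb{Q}$. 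This is the structural change from Lemma \ref{lem:11}, where $f_2$ had $c_2=\frac{1-z^2}{4}$; everything downstream is a recomputation with this substitution.

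Next I would set up the same finite-orbit conditions. Since $P$ has finite orbit for $S$, it is preperiodic for $f_2$, and $Q:=f_2^2(P)$ is preperiodic for $f_1$. Because $\mu_S(\mathbb{Q})\leq3$ and each map has a rational fixed point or $2$-cycle, Theorems 1, 2, and 3 of \cite{Poonen} again force every rational preperiodic point to land in a fixed point or $2$-cycle within two iterates, giving the ideal generators
\[
f_2^4(P)-f_2^2(P)=0 \quad\text{and}\quad f_1^4(Q)-f_1^2(Q)=0.
\]
Treating $y,z$ as indeterminates, I would form $F_1(\mathbf{y},\mathbf{z})$ and $F_2(\mathbf{y},\mathbf{z})$ in $\mathbb{Q}[\mathbf{y},\mathbf{z}]$ and compute a Gr\"{o}bner basis for the ideal $I=(F_1,F_2)$ with {\tt{Magma}}. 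As before, I expect an element of $I$ to factor as a one-variable polynomial $G_{\mathbf{z}}$ times a product of linear and quadratic factors in $\mathbf{y},\mathbf{z}$; isolating the finitely many rational roots of $G_{\mathbf{z}}$ and the factors encoding the special curves $z=\pm y$, $z=\pm(y+2)$, $y^2-z^2=\text{const}$, etc., reduces the problem to a short list of cases.

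I would then dispose of each case exactly as in the previous lemma. The degenerate factors (those forcing $c_1=c_2$) are discarded. Each nondegenerate conic factor is a genus-zero curve with an obvious rational point, so I parametrize its rational points by $t\in\mathbb{Q}$ and substitute to obtain the candidate families (a) and (b); for each surviving family I verify with {\tt{Magma}} that an explicit finite set such as $\{P,-P,f_2(P),f_1(f_2(P))\}$ is stable under $S$, confirming the orbit is genuinely finite. For any residual conic that does not immediately yield a finite orbit, I would impose the extra constraint that $W:=f_2(P)$ is also preperiodic for $f_1$ (i.e. $f_1^4(W)-f_1^2(W)=0$), which collapses to a one-variable equation whose finitely many rational solutions I check by hand, expecting them to be spurious. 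The remaining sporadic rational roots from $G_{\mathbf{z}}$ get matched against the back-substituted $B_i$ generators to extract the isolated pairs in (c), namely $(c_1,c_2)\in\{(-\frac{5}{16},-\frac{13}{16}),(-\frac{21}{16},-\frac{13}{16})\}$.

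The main obstacle is not conceptual but computational and bookkeeping: the Gr\"{o}bner basis computation for $I$ may produce generators of high degree (the analogous $G_{\mathbf{z}}$ in Lemma \ref{lem:11} had degree $28$), and with the $2$-cycle substitution $c_2=\frac{-3-z^2}{4}$ the iterates $f_2^4(P)$ are genuinely degree-$16$ rational functions in $\mathbf{y},\mathbf{z}$, so controlling the factorization and correctly identifying \emph{which} rational $(y,z)$ pairs actually yield finite orbits (rather than merely satisfying the necessary ideal membership) requires careful case analysis. The subtle point, exactly as flagged in the proof of Lemma \ref{lem:11}, is that ideal membership gives only a \emph{necessary} condition; the sufficiency in families (a) and (b) must be checked directly by exhibiting an $S$-stable finite set, and the extra preperiodicity constraint on $W=f_2(P)$ is what rules out the false positives arising from one of the conic factors.
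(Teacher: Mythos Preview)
Your plan is essentially identical to the paper's proof: same parametrizations $c_1=\frac{1-y^2}{4}$, $P=\frac{1+y}{2}$, $c_2=\frac{-3-z^2}{4}$, same ideal generators $f_2^4(P)-f_2^2(P)$ and $f_1^4(Q)-f_1^2(Q)$ with $Q=f_2^2(P)$, the same Gr\"{o}bner-basis reduction to a univariate $G_{\mathbf z}$ (degree $30$ here, rational roots $z\in\{0,\pm 1/2\}$) times the factors $(\mathbf y\pm\mathbf z)(\mathbf y\pm\mathbf z+2)(\mathbf y^2-\mathbf z^2-4)(\mathbf y^2+4\mathbf y-\mathbf z^2)$, and the same case-by-case disposal yielding families (a), (b) and the sporadic pairs (c). The one spot where your outline may need adjustment is the residual factor $z=\pm(y+2)$: the paper imposes preperiodicity not of $W=f_2(P)$ but of the longer word $W=(f_2\circ f_1\circ f_2\circ f_1\circ f_2\circ f_2)(P)$ under $f_1$, presumably because the shorter choice does not cut the locus down to finitely many $y$; otherwise your proposal matches the paper step for step.
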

\begin{proof} We note first that if $f=x^2+c$ for some $c\in\mathbb{Q}$ and $f$ has a rational fixed-point $P\in\mathbb{Q}$, then 
$(c,P)=(\frac{1-y^2}{4},\frac{1+y}{2})$ for some $y\in\mathbb{Q}$. Likewise, if $f=x^2+c$ has a point $W\in\mathbb{Q}$ of exact period two, then 
$(c,W)=\big(\frac{-(3+z^2)}{4},\frac{-1+z}{2}\big)$ for some $z\in\mathbb{Q}$. These characterizations follow easily from examining the equations $x^2+c-x=0$ and $(x^2+c-x)(x^2 + x + c + 1)=(x^2+c)^2+c-x=0$ respectively. In particular, since all the polynomials involved are quadratic in $x$, they have rational roots if and only if their discriminants are rational squares. Hence, we may assume that: 
\[c_1=\frac{1-y^2}{4},\; P=\frac{1+y}{2},\; c_2=\frac{-(3+z^2)}{4}\;\; \text{for some $y,z\in\mathbb{Q}$};\]
we need not keep track of the rational point of period two for $f_2$, since we make no assumptions about the action of $S$ on it. On the other hand, since $P$ is assumed to have finite orbit for $S$, it must be the case that $P$ is preperiodic for $f_2$ and $Q:=f_2^2(P)$ is preperiodic for $f_1$. In particular, since $\mu_S(\mathbb{Q})\leq3$, $f_1$ has a rational fixed point, and $f_2$ has a rational point of period two, Theorems 1, 2, and 3 of \cite{Poonen} together imply that 
\begin{equation}{\label{idealgen12}}
f_2^4(P)-f_2^2(P)=0 \;\;\text{and}\;\; f_1^4(Q)-f_1^2(Q)=0;
\end{equation} 
the key point here is that the classification theorems in \cite{Poonen} imply that all rational preperiodic points (of maps with fixed points or points of period two) enter a fixed point or $2$-cycle after two iterates, assuming as always, $\mu_S(\mathbb{Q})\leq3$.

We now view the rational numbers $y,z\in\mathbb{Q}$ as specializations of the variables \textbf{y} and \textbf{z}, and let $F_1(\textbf{y},\textbf{z})=f_{2}^4(P)-f_2^2(P)$ and $F_2(\textbf{y},\textbf{z})=f_{1}^4(Q)-f_{1}^2(Q)$ be polynomials in $\mathbb{Q}[\textbf{y},\textbf{z}]$. Now, we compute with {\tt{Magma}} that the polynomial \vspace{.1cm}  
\[F(\textbf{y},\textbf{z}):=G_{\textbf{z}}(\textbf{y}-\textbf{z})(\textbf{y} + \textbf{z})(\textbf{y} - \textbf{z} + 2)(\textbf{y} + \textbf{z} + 2)(\textbf{y}^2 - \textbf{z}^2 - 4)(\textbf{y}^2 + 4\textbf{y} - \textbf{z}^2) \vspace{.1cm}\]
is in the $\mathbb{Q}[\textbf{y},\textbf{z}]$-ideal generated by $F_1(\textbf{y},\textbf{z})$ and $F_2(\textbf{y},\textbf{z})$ (by computing a Gr\"{o}bner basis for this ideal); here $G_{\textbf{z}}\in\mathbb{Q}[\textbf{z}]$ is a polynomial in the variable $\textbf{z}$ of degree $30$. In particular, it follows from (\ref{idealgen12}) that the rational pair $(y,z)\in\mathbb{Q}^2$, associated to a finite orbit tuple $(c_1,c_2,P)$ as in Lemma \ref{lem:12}, must satisfy $F(y,z)=0$. On the other hand, {\tt{Magama}} computes that the only rational roots of $G_{\textbf{z}}$ are $z=0$ and $z=\pm{1/2}$. Therefore, $(y,z)\in\mathbb{Q}^2$ parametrizing $(c_1,c_2,P)$ must satisfy: 
\[z\in\Big\{0,\pm{1/2}\Big\},\;\;\; z=\pm{y},\;\;z=\pm(y+2),\;\;\; y^2-z^2=4,\;\;\text{or}\;\;\; y^2+4y-z^2=0.\]       
We proceed to evaluate these possibilities. Suppose first that $y^2-z^2=4$. Then since $y^2-z^2=4$ is a rational curve with a rational point $(2,0)$, we may parametrize its rational points via 
\[(y,z)=\Big(\frac{-2t^2-2}{t^2 - 1},\frac{-4t}{t^2 - 1}\Big)\;\;\; \text{for $t\in\mathbb{Q}$}.\] 
In particular, with this parametrization for $(y,z)$ the tuple $(c_1,c_2,P)$ takes the form: 
\[ (c_1,c_2,P)=\bigg(\frac{-3t^4 - 10t^2-3}{4(t^2 -1)^2},\, \frac{-3t^4 - 10t^2 -3}{4(t^2 -1)^2},\, \frac{-t^2-3}{2(t^2 - 1)}\bigg)\;\;\; \text{for $t\in\mathbb{Q}$}.\] 
Therefore, $c_1=c_2$ in this case, and we obtain an immediate contradiction.  

Suppose next that $y^2+4y-z^2=0$. Then since $y^2+4y-z^2=0$ is a rational curve with a rational point $(0,0)$, we may parametrize its rational points via 
\[(y,z)=\Big(\frac{-4t^2}{t^2 - 1},\frac{-4t}{t^2 - 1}\Big)\;\;\; \text{for $t\in\mathbb{Q}$}.\] 
In particular, with this parametrization for $(y,z)$ the tuple $(c_1,c_2,P)$ takes the form: 
\[ (c_1,c_2,P)=\bigg(\frac{-15t^4 - 2t^2 + 1}{4(t^2 -1)^2},\, \frac{-3t^4 - 10t^2 -3}{4(t^2 -1)^2},\, \frac{-3t^2- 1}{2(t^2 - 1)}\bigg)\;\;\; \text{for $t\in\mathbb{Q}$}.\]
Moreover, we check with {\tt{Magma}} that the set $\{P,-P\}$ is stable under the action of $S$ in this case. Hence, $P$ has finite orbit for $S$. 

Suppose next that $z=\pm{y}$. Then the tuple $(c_1,c_2,P)$ takes the form: 
\begin{equation}{\label{finiteorbitpair2}} 
(c_1,c_2,P)=\Big(\frac{1-y^2}{4},\,\frac{-3-y^2}{4},\, \frac{1+y}{2}\Big)\;\;\; \text{for some $y\in\mathbb{Q}$}.
\end{equation} 
Moreover, we check with {\tt{Magma}} that the set $\{P,-P,f_2(P), f_1(f_2(P))\}$ is stable under the action of $S$ in this case. Hence, $P$ has finite orbit for $S$. 

Suppose next that $z=\pm{(y+2)}$. Then the tuple $(c_1,c_2,P)$ takes the form: 
\[(c_1,c_2,P)=\Big(\frac{1-y^2}{4},\,\frac{-3-(y+2)^2}{4},\, \frac{1+y}{2}\Big)\;\;\; \text{for some $y\in\mathbb{Q}$}.\] 
However, since $P$ has finite orbit for $S$, so does $W:=(f_2\circ f_1\circ f_2\circ f_1\circ f_2\circ f_2)(P)$. In particular, $W$ must be preperiodic for $f_1$. Therefore, Theorems 1, 2, and 3 of \cite{Poonen} together imply that $f_1^4(W)-f_1^2(W)=0$. However, this expression is in $y$ alone, and we check with {\tt{Magma}} that $y=-1,-3/2,-2,-5/2$ are the only rational solutions to this equation. However, if $y=-1,-3/2, -5/2$, then we obtain the tuples \[(c_1,c_2,P)=(0,-1,0),\; (-5/16,-13/16, -1/4),\; (-21/16,-13/16,-3/4)\]
with finite orbit. On the other hand, the tuple $(c_1,c_2,P)=(0,-1,0)$ is already accounted for in the family in (\ref{finiteorbitpair2}) above by setting $y=-1$.  

Finally, suppose that $z\in\{0,\pm{1/2}\}$ and let $I$ be the $\mathbb{Q}[\textbf{y},\textbf{z}]$-ideal generated by $F_1(\textbf{y},\textbf{z})$ and $F_2(\textbf{y},\textbf{z})$. We compute a Gr\"{o}bner basis $\{B_1(\textbf{y},\textbf{z}),B_2(\textbf{y},\textbf{z}),B_3(\textbf{y},\textbf{z})\}$ for $I$ with {\tt{Magma}}; in fact the function $F(\textbf{y},\textbf{z})$ above is $B_3(\textbf{y},\textbf{z})$. In particular, since every element of $I$ must vanish at $(y,z)$, a pair of rational numbers associated to a finite orbit tuple $(c_1,c_2,P)$ as in Lemma \ref{lem:12}, the polynomials $B_2(\textbf{y},z)\in\mathbb{Q}[\textbf{y}]$ for $z\in\big\{0,\pm{1/2}\big\}$ must have a rational root at their corresponding $y$ value. However, we compute with {\tt{Magma}} the possible rational roots of these polynomials, and deduce that 
\[z\in\big\{0,\pm{1/2}\big\}\;\; \text{implies} \;\; y\in\big\{\pm{1/2},\pm{3/2},-5/2\big\}.\] 
On the other hand, $B_2(\textbf{y},\textbf{z})$ and $B_3(\textbf{y},\textbf{z})$ must also vanish at these pairs, and among the pairs where this is so, all but those associated to $(c_1,c_2)=\big(-\frac{5}{16},-\frac{13}{16}\big)$ and $(c_1,c_2)=\big(-\frac{21}{16},-\frac{13}{16}\big)$ are accounted for in the family (\ref{finiteorbitpair2}) above. 
\end{proof} 
We next prove a classification result for sets of quadratic polynomials $S=\{f_1,f_2\}$ possessing a specified finite orbit point when both $f_1$ and $f_2$ have rational points of period two. 
\begin{lemma}{\label{lem:22}} Let $S=\{f_1, f_2\}$ with $f_1=x^2+c_1$ and $f_2=x^2+c_2$ for some distinct $c_1,c_2\in\mathbb{Q}$. If $S$ has the following properties:    
\begin{enumerate}[topsep=7pt, partopsep=7pt, itemsep=8pt]
\item[\textup{(1)}] $\mu_S(\mathbb{Q})\leq3$,  
\item[\textup{(2)}] $f_1$ and $f_2$ have rational points of period two,  
\item[\textup{(3)}] a point $P\in\mathbb{Q}$ of period two for $f_1$ has finite orbit for $S$, 
\end{enumerate}  
then at least one of the following statements holds:  \begin{enumerate}[topsep=7pt, partopsep=7pt, itemsep=8pt] 
\item[\textup{(a)}] $(c_1,c_2,P)=\Big(\frac{-7t^4 - 2t^2 -7}{4(t^2 -1)^2},\, \frac{-3t^4 - 10t^2 -3}{4(t^2 -1)^2},\, \frac{-3t^2- 1}{2(t^2 - 1)}\Big)$ for some $t\in\mathbb{Q}$,
\item[\textup{(b)}] $(c_1,c_2)\in \Big\{\big(-\frac{3}{4},-\frac{7}{4}\big),\big(-\frac{7}{4},-\frac{3}{4}\big)\;\big(-\frac{13}{16},-\frac{21}{16}\big),\big(-\frac{21}{16},-\frac{13}{16}\big) \; \big(-\frac{37}{16},-\frac{21}{16}\big) \Big\}$. 
\end{enumerate}   
\end{lemma}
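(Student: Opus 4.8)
The plan is to run the same machinery used in the proofs of Lemmas \ref{lem:11} and \ref{lem:12}, now specialized to the case where both maps carry a rational point of exact period two. Recalling the characterization from the proof of Lemma \ref{lem:12}, a quadratic $x^2+c$ with a rational point of period two has $c=\frac{-(3+w^2)}{4}$ for some $w\in\mathbb{Q}$, with period-two point $\frac{-1+w}{2}$. Accordingly I would set
\[
c_1=\frac{-(3+y^2)}{4},\quad P=\frac{-1+y}{2},\quad c_2=\frac{-(3+z^2)}{4}
\]
for rational parameters $y,z$, taking $P$ to be the chosen period-two point of $f_1$ and leaving the period-two point of $f_2$ untracked. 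Since $P$ has finite orbit for $S$, it is preperiodic for $f_2$ and $Q:=f_2^2(P)$ is preperiodic for $f_1$; because $\mu_S(\mathbb{Q})\leq3$ and both maps have period-two points, Theorems 1, 2, and 3 of \cite{Poonen} force every rational preperiodic point to enter a fixed point or $2$-cycle after two iterates, giving the two necessary conditions $F_1(\textbf{y},\textbf{z}):=f_2^4(P)-f_2^2(P)=0$ and $F_2(\textbf{y},\textbf{z}):=f_1^4(Q)-f_1^2(Q)=0$ as polynomials in $\mathbb{Q}[\textbf{y},\textbf{z}]$.

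I would then compute with {\tt{Magma}} a Gr\"{o}bner basis for the ideal $I=(F_1,F_2)\subset\mathbb{Q}[\textbf{y},\textbf{z}]$ and extract a conveniently factored element $F(\textbf{y},\textbf{z})=G_{\textbf{z}}\cdot\prod(\text{low-degree factors})$, with $G_{\textbf{z}}$ univariate in $\textbf{z}$. The finitely many rational roots of $G_{\textbf{z}}$, together with the vanishing of the remaining factors, partition the rational solutions $(y,z)$ into a short list of branches: linear relations $z=\pm y$ and $z=\pm(y+2)$, two conics analogous to those appearing in Lemmas \ref{lem:11} and \ref{lem:12}, and the fibers over the rational roots of $G_{\textbf{z}}$. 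Here the branch $z=\pm y$ forces $c_1=c_2$ and is discarded at once, one conic (namely $y^2-z^2=4$) parametrizes via a rational point the family in part (a), and the branch $z=\pm(y+2)$ together with the exceptional fibers will produce the sporadic pairs of part (b). As in Lemma \ref{lem:12}, certain branches satisfy both necessary conditions without yielding a genuine finite orbit; I would eliminate these by imposing one further iterate condition, e.g.\ $f_1^4(W)-f_1^2(W)=0$ for a suitable $W=\rho(P)$ with $\rho\in M_S$, which collapses to a univariate equation with only finitely many rational roots.

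The main obstacle is the bookkeeping of the final verification rather than any single hard computation. For the family in part (a) and each sporadic pair in part (b) one must confirm that $P$ genuinely has finite orbit by exhibiting an explicit finite subset of $\mathbb{P}^1(\mathbb{Q})$ containing $P$ that is stable under both $f_1$ and $f_2$, and conversely one must ensure that no spurious solution of the necessary conditions is mistaken for a finite orbit tuple. Since $P$ is already periodic for $f_1$, the candidate stable sets are typically small (often just $\{P,-P\}$ or a union of the two $2$-cycles with their negatives), so these checks are routine in {\tt{Magma}}; the care lies in matching each rational root of the Gr\"{o}bner-basis polynomials to the correct tuple $(c_1,c_2,P)$ and in recognizing which of these already lie in the family of part (a).
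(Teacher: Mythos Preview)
Your approach is essentially the paper's, but with one tactical difference worth noting. You follow Lemmas \ref{lem:11} and \ref{lem:12} verbatim and take only two generators $F_1=f_2^4(P)-f_2^2(P)$ and $F_2=f_1^4(Q)-f_1^2(Q)$ with $Q=f_2^2(P)$, expecting the Gr\"{o}bner basis to spit out linear factors $z=\pm y$, $z=\pm(y+2)$, two conics, and a univariate residue, and then to clean up the extra branches with further iterate conditions. The paper instead throws in a \emph{third} generator from the outset, namely $f_2^4(Q_2)-f_2^2(Q_2)$ with $Q_2=f_1(f_2(f_1(P)))$ (and uses $Q_1=f_2(P)$ rather than $f_2^2(P)$). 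The payoff is a much shorter factorization: the basis element collapses to $G(\textbf{z})(\textbf{y}-\textbf{z})(\textbf{y}+\textbf{z})(\textbf{y}^2-\textbf{z}^2-4)$ with no $(y\pm(z+2))$ factors and only a single conic, so the case analysis is just ``$y=\pm z$ forces $c_1=c_2$, the conic $y^2-z^2=4$ gives family (a), and the rational roots of the degree-$64$ univariate $G$ give the sporadic list (b)''. Your two-generator version would still go through, but you would face more branches and more ad hoc elimination via auxiliary $W=\rho(P)$; the paper's extra generator front-loads that work into the Gr\"{o}bner computation.
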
 
\begin{proof} Since $f_1$ and $f_2$ have rational points of period two, we may assume that the finite orbit tuple $(c_1,c_2,P)$ in Lemma \ref{lem:22} takes the form 
\[(c_1,c_2,P)=\bigg(\frac{-(3+y^2)}{4}, \frac{-(3+z^2)}{4}, \frac{-1+y}{2}\bigg) \;\;\; \text{for some $y,z\in\mathbb{Q}$}.\] 
On the other, since $P$ has finite orbit for $S$, it must be the case that $P$, $Q_1:=f_2(P)$ and $Q_2:=f_1(f_2(f_1(P)))$ are preperiodic for both $f_1$ and $f_2$. Hence, Theorems 1, 2, and 3 of \cite{Poonen} imply that 
\[f_2^4(P)-f_2^2(P)=0, \;\; f_1^4(Q_1)-f_1^2(Q_1)=0,\;\;\text{and}\;\; f_2^4(Q_2)-f_2^2(Q_2)=0.\]  
With these relations in mind, let $N(\textbf{y},\textbf{z})$ be the polynomial in $\mathbb{Q}[\textbf{y},\textbf{z}]$ corresponding to $f_2^4(P)-f_2^2(P)$, let $A_1(\textbf{y},\textbf{z})$ be the polynomial corresponding to $f_1^4(Q_1)-f_1^2(Q_1)$, and let $A_2(\textbf{y},\textbf{z})$ be the polynomial corresponding to $f_2^4(Q_2)-f_2^2(Q_2)$. In particular, if $I$ denotes the $\mathbb{Q}[\textbf{y},\textbf{z}]$ ideal generated by $N$, $A_1$ and $A_2$, then the elements of $I$ must vanish at any $(y,z)\in\mathbb{Q}^2$ associated to a finite orbit tuple $(c_1,c_2,P)$ as in Lemma \ref{lem:22}. However, we compute a Gr\"{o}bner basis $\{B_1(\textbf{y},\textbf{z}),B_2(\textbf{y},\textbf{z}), B_3(\textbf{y},\textbf{z})\}$ for $I$ with {\tt{Magma}} and find that $B_3(\textbf{y},\textbf{z})$ has a factorization  
\[B_3(\textbf{y},\textbf{z})=G(\textbf{z})(\textbf{y}-\textbf{z})(\textbf{y}+\textbf{z})(\textbf{y}^2-\textbf{z}^2-4),\] 
for some $G(\textbf{z})\in\mathbb{Q}[\textbf{z}]$ of degree 64. Moreover, we compute with {\tt{Magma}} that the only rational roots of $G(\textbf{z})$ are $z\in\{0,\pm{1},\pm{2},\pm{1/2},\pm{3/2}\}$. Therefore, if $(y,z)$ corresponds to a finite orbit tuple $(c_1,c_2,P)$ in Lemma \ref{lem:22}, then 
\[z\in\{0,\pm{1},\pm{2},\pm{1/2},\pm{3/2}\}, \;\ y=\pm{z},\;\;\text{or}\;\; y^2-z^2=4.\] 
However, we note that if $y=\pm{z}$, then $c_1=c_2$, and we obtain an immediate contradiction in this case. Therefore, we proceed to examine the remaining cases now. 

Suppose first that $y^2-z^2=4$. Then since $y^2-z^2=4$ is a rational curve with point $(2,0)$, we may parametrize its rational solutions via:   
\[(y,z)=\Big(\frac{-4t^2}{t^2 - 1},\frac{-4t}{t^2 - 1}\Big)\;\;\; \text{for $t\in\mathbb{Q}$}.\] 
In particular, the tuple $(c_1,c_2,P)$ takes the form
\[(c_1,c_2,P)=\bigg(\frac{-7t^4 - 2t^2 -7}{4(t^2 -1)^2},\, \frac{-3t^4 - 10t^2 -3}{4(t^2 -1)^2},\, \frac{-3t^2- 1}{2(t^2 - 1)}\bigg)\;\;\; \text{for $t\in\mathbb{Q}$}.\] 
Moreover, we check with {\tt{Magma}} that the set $\big\{P, f_2(P), f_1(f_2(P)), -f_1(f_2(P))\big\}$ is stable under the action of $S$. In particular, $(c_1,c_2,P)$ is a finite orbit pair in this case. 

Finally, suppose that $z\in\{0,\pm{1},\pm{2},\pm{1/2},\pm{3/2}\}$. Then the polynomials $B_i(\textbf{y},z)\in\mathbb{Q}[\textbf{y}]$ for $z\in\{0,\pm{1},\pm{2},\pm{1/2},\pm{3/2}\}$ and $1\leq i\leq3$ must rational roots at their corresponding $y$ value. However, we compute with {\tt{Magma}} the possible rational roots of these polynomials and deduce that 
\[z\in\{0,\pm{1},\pm{2},\pm{1/2},\pm{3/2}\}\;\; \text{implies} \;\; y\in\big\{0,\pm{1},\pm{2},\pm{1/2},\pm{3/2},\pm{5/2}\big\}.\]  
Moreover, among these values, we obtain the pairs of maps given by
\[(c_1,c_2)\in\Scale[.99]{\bigg\{\Big(-\frac{3}{4},-\frac{7}{4}\Big),\Big(-\frac{7}{4},-\frac{3}{4}\Big)\;\Big(-\frac{13}{16},-\frac{21}{16}\Big),\Big(-\frac{21}{16},-\frac{13}{16}\Big) \; \Big(-\frac{37}{16},-\frac{21}{16}\Big) \bigg\}},\] all of which posses finite orbit rational points; see our {\tt{Magma}} code for more details.  
\end{proof} 
We next prove a classification result for sets of quadratic polynomials $S=\{f_1,f_2\}$ possessing an unspecified finite orbit point when both $f_1$ and $f_2$ have a rational point of period three. 
\begin{lemma}{\label{lem:33}} Let $S=\{f_1, f_2\}$ with $f_1=x^2+c_1$ and $f_2=x^2+c_2$ for some distinct $c_1,c_2\in\mathbb{Q}$. If $S$ has the following properties:     
\begin{enumerate}[topsep=7pt, partopsep=7pt, itemsep=8pt]
\item[\textup{(1)}] $\mu_S(\mathbb{Q})\leq3$,  
\item[\textup{(2)}] $f_1$ and $f_2$ have rational points of period three,   
\end{enumerate}  
then there are no points $P\in\mathbb{Q}$ with finite orbit for $S$.   
\end{lemma}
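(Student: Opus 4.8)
The plan is to follow the template of Lemmas \ref{lem:11}--\ref{lem:22}, but with the twist that no periodic point is singled out as the basepoint, so I first reduce the problem to a statement about the two $3$-cycles alone. Since the period-three dynatomic curve for $x^2+c$ is rational (of genus zero), the set of $c\in\mathbb{Q}$ for which $x^2+c$ has a rational point of exact period three is the image of a rational parametrization; write $c_1=g(s)$ and $c_2=g(t)$ for $s,t\in\mathbb{Q}$, and record the two $3$-cycles $\{a_1(s),a_2(s),a_3(s)\}$ and $\{b_1(t),b_2(t),b_3(t)\}$ as explicit rational functions of $s$ and $t$ respectively.

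Next I would exploit the rigidity forced by finiteness. Suppose $P\in\mathbb{Q}$ has finite orbit for $S$ and set $T=\Orb_S(P)$; this is a finite set satisfying $f_1(T)\subseteq T$ and $f_2(T)\subseteq T$, so every element of $T$ is rational and preperiodic for both $f_1$ and $f_2$. By Theorems 1, 2, and 3 of \cite{Poonen} together with $\mu_S(\mathbb{Q})\leq3$, for $c_1$ in this family $f_1$ has no rational fixed point or rational point of period two (barring finitely many special values, treated separately), so its $3$-cycle is the unique rational cycle of $f_1$; since iterating $f_1$ carries $T$ into its rational cycles, $T$ must contain $\{a_1(s),a_2(s),a_3(s)\}$, and symmetrically $T$ contains $\{b_1(t),b_2(t),b_3(t)\}$. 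In particular each $a_i(s)$ is preperiodic for $f_2$ and each $b_j(t)$ is preperiodic for $f_1$. Invoking Poonen's classification once more to bound the tail length, these conditions become polynomial relations in $s$ and $t$ asserting that a bounded iterate of $a_i(s)$ under $f_2$ (respectively of $b_j(t)$ under $f_1$) is periodic of period at most three.

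I would then let $I\subseteq\mathbb{Q}[s,t]$ be the ideal generated by these relations, compute a Gr\"{o}bner basis for $I$ in {\tt{Magma}}, and isolate an elimination polynomial $G(t)$ dividing a generator of $I\cap\mathbb{Q}[t]$. Exactly as in the earlier lemmas, I expect $G$ to split into a factor recording the diagonal $c_1=c_2$, which is discarded since the $c_i$ are distinct, together with a high-degree factor whose finitely many rational roots I would list directly. Back-substituting each root produces finitely many candidate pairs $(c_1,c_2)$, and I would check one by one that none carries a genuine rational finite-orbit point: in each surviving case either the parameters force $c_1=c_2$, or a direct computation shows that the purported finite set is not stable under $S$. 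A final, separate inspection of the finitely many special $c$-values whose $3$-cycle portrait is larger than the generic one (where some $-a_i$ acquires a rational preimage, or a rational cycle of another length appears) then completes the argument.

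The main obstacle I anticipate is computational rather than conceptual. Because the iterates $f_2^{\,n}(a_i(s))$ are built by composing $f_2$ with the already nontrivial rational functions $a_i(s)$, the generators of $I$ have very large degree---substantially larger than the degree-$64$ factor met in Lemma \ref{lem:22}---so both the Gr\"{o}bner basis computation and the factorization and root-finding for $G(t)$ sit near the limit of feasibility. The second delicate point is bookkeeping: I must take the iterate count large enough that the periodicity relations genuinely capture every tail length and cycle length permitted by Poonen's portraits, while keeping degrees low enough to finish, and I must be sure that the finitely many enlarged-portrait coefficients are not quietly discarded along the way.
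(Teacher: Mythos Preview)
Your plan is essentially the paper's: parametrize both period-three families, reduce to a point on one $3$-cycle having finite orbit, impose cross-preperiodicity constraints, and eliminate via a Gr\"{o}bner basis. The paper is more economical in two respects that dissolve the computational worry you flag. First, it uses only two relations rather than six: fixing $Q_1$ on the $f_2$-cycle, it records that $Q_1$ and $f_2(Q_1)$ are each preperiodic for $f_1$; the reverse direction is never needed. Second---and this is the real efficiency---it does not encode preperiodicity as a high-iterate identity of the shape $f_1^{n+3}(Q)=f_1^n(Q)$. Instead, after setting aside the single special value $c_1=-29/16$, Poonen's classification gives tail length at most one, so the condition becomes $\prod_{i=1}^3\bigl(f_1(Q)-P_i\bigr)=0$, a product of three low-degree expressions in the explicitly parametrized cycle points $P_i$. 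With this encoding the elimination polynomial $G(t)$ has degree only $38$, far below the ``near the limit of feasibility'' regime you anticipate, and the residual factors $(y-t)(yt+t+1)(yt+y+1)$ each force $c_1=c_2$ directly.
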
 
\begin{proof} We begin with a few preliminaries. First, since $f_1\neq f_2$, we may assume that at least one $c_i\neq-29/16$. Without loss of generality, we assume that $c_1\neq-29/16$. Next, note that if $f=x^2+c$ for some $c\in\mathbb{Q}$ and $f$ has a rational point of period three, then the tuple $(c,P_1,P_2,P_3)$ takes the form \vspace{.15cm}    
\begin{equation}{\label{3cycle}}
\begin{split}
c=&\frac{-(y^6+2y^5+4y^4+8y^3+9y^2+4y+1)}{4y^2(y+1)^2},\\[5pt]
P_1=\frac{y^3-y-1}{2y(y+1)}&,\;\; P_2=\frac{y^3+2y^2+y+1}{(2y(y+1)},\;\; P_3=\frac{-(y^3+2y^2+3y+1)}{2y(y+1)}
\end{split} 
\end{equation} 
for some $y\in\mathbb{Q}$; here $P_1, P_2$ and $P_3$ are the unique points of period three for $f$. For a justification of this fact, see \cite[Theorem 1]{Poonen}. In particular, both $f_1$ and $f_2$ have parametrizations of this form. Moreover, we use $y\in\mathbb{Q}$ to parametrize $(c_1,P_1,P_2,P_3)$ and $t\in\mathbb{Q}$ to parametrize $(c_2,Q_1,Q_2,Q_3)$ respectively. Now suppose that there exists a point $P\in\mathbb{Q}$ with finite orbit for $S$. In particular, $P$ must be preperiodic for $f_2$. Hence, Theorems 1, 2, and 3 of \cite{Poonen} together imply that the $f_2$-orbit of $P$ must contain $Q_1$ (all preperiodic orbits contain the unique $3$-cycle); here we use that $\mu_S(\mathbb{Q})\leq3$. In particular, the $S$-orbit of $P$ must contain the $S$-orbit of $Q_1$. Therefore, we may assume that $Q_1$ has finite orbit for $S$.

On the other hand, if $Q_1$ has finite orbit for $S$, then $Q_1$ is preperiodic for $f_1$. In particular, since $c_1\neq-29/16$, it must be the case that $f_1(Q_1)\in\{P_1,P_2,P_3\}$; see \cite[Theorem 3]{Poonen}. Similarly, $f_2(Q_1)$ must be preperiodic for $f_1$, and hence $f_1(f_2(Q_1))\in\{P_1,P_2,P_3\}$. Now let $N_i(\textbf{y},\textbf{t})$ be the polynomial in two variables given by the numerator of the rational expression $f_1(Q_1)-P_i$; here we view $y,t\in\mathbb{Q}$ as specializations of the variables $\textbf{y}$ and $\textbf{t}$ respectively. Likewise, let $A_i(\textbf{y},\textbf{t})$ be the numerator of the expression $f_1(f_2(Q_1)-P_i$. In particular, setting $N(\textbf{y},\textbf{t})=N_1N_2N_3$ and $A(\textbf{y},\textbf{t})=A_1A_2A_3$, we see that 
\[N(y,t)=0\;\;\; \text{and}\;\;\; A(y,t)=0,\] 
for all pairs $(y,t)$ associated to finite orbit tuples $(c_1,c_2,P)$ as in Lemma \ref{lem:33}. On the other hand, we compute with {\tt{Magma}} that the $\mathbb{Q}[\textbf{y},\textbf{t}]$ ideal generated by $N(\textbf{y},\textbf{t})$ and $A(\textbf{y},\textbf{t})$ contains the polynomial 
\[G(\textbf{t})(\textbf{y}-\textbf{t})(\textbf{yt} + \textbf{t} + 1)(\textbf{yt} + \textbf{y} + 1);\]
here $G(\textbf{t})\in\mathbb{Q}[\textbf{t}]$ has degree $38$. In particular, if $(y,t)\in\mathbb{Q}^2$ is associated to finite orbit tuples $(c_1,c_2,P)$ as in Lemma \ref{lem:33}, then $G(t)=0$, $y=t$, $y=-(t+1)/t$, or $y=-1/(t+1)$. However, the only rational roots of $G(\textbf{t})$ are $t=0,-1$, and both of these choices for $t$ force $c_2$ to be infinite, a contradiction. Moreover, if $y=t$, $y=-(t+1)/t$, or $y=-1/(t+1)$, then we check with {\tt{Magma}} that $c_1=c_2$, a contradiction. Hence, there cannot exist a point $P\in\mathbb{Q}$ with finite orbit for $S$ as claimed.               
\end{proof} 
We next prove a classification result for sets of quadratic polynomials $S=\{f_1,f_2\}$ possessing an unspecified finite orbit point when $f_1$ has a rational fixed point and $f_2$ has a rational point of period three.
\begin{lemma}{\label{lem:13}} Let $S=\{f_1, f_2\}$ with $f_1=x^2+c_1$ and $f_2=x^2+c_2$ for some distinct $c_1,c_2\in\mathbb{Q}$. If $S$ has the following properties:     
\begin{enumerate}[topsep=7pt, partopsep=7pt, itemsep=8pt]
\item[\textup{(1)}] $\mu_S(\mathbb{Q})\leq3$,  
\item[\textup{(2)}] $f_1$ has a rational fixed point and $f_2$ has a rational point of period three, 
\item[\textup{(3)}] there exists $P\in\mathbb{Q}$ with finite orbit for $S$,    
\end{enumerate}  
then $c_1=-\frac{21}{16}$ and $c_2=-\frac{29}{16}$.     
\end{lemma}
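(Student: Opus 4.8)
The plan is to follow the template of Lemma \ref{lem:33}, now exploiting the asymmetry between the fixed-point structure of $f_1$ and the $3$-cycle structure of $f_2$. First I would fix normal forms. Since $f_1=x^2+c_1$ has a rational fixed point, examining $x^2-x+c_1=0$ gives $c_1=\frac{1-y^2}{4}$ for some $y\in\mathbb{Q}$, with the two fixed points equal to $\frac{1\pm y}{2}$; note that $x^2-\frac{29}{16}$ has no rational fixed point (its fixed-point discriminant is $\frac{33}{4}$, not a square), so automatically $c_1\neq-\frac{29}{16}$. For $f_2$ I would use the period-three parametrization (\ref{3cycle}) in a variable $t\in\mathbb{Q}$, writing $c_2$ together with the $3$-cycle $Q_1,\,Q_2=f_2(Q_1),\,Q_3=f_2(Q_2)$ as explicit rational functions of $t$. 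Because $P$ has finite orbit for $S$, it is preperiodic for $f_2$; since $\mu_S(\mathbb{Q})\leq3$ and $f_2$ has a rational $3$-cycle, Theorems 1, 2, and 3 of \cite{Poonen} force the $f_2$-orbit of $P$ to contain the unique $3$-cycle. Hence, after relabelling, $Q_1\in\Orb_S(P)$, so $Q_1$ — and therefore also $Q_2$ and $Q_3$ — has finite orbit for $S$.

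Next I would turn finiteness into polynomial relations. As $Q_1$ and $Q_2$ have finite orbit for $S$, both are preperiodic for $f_1$; and since $f_1$ has a rational fixed point with $c_1\neq-\frac{29}{16}$, the structural fact used in Lemmas \ref{lem:11} and \ref{lem:12} (a consequence of Theorems 1, 2, and 3 of \cite{Poonen} under $\mu_S(\mathbb{Q})\leq3$) guarantees that every rational preperiodic point of $f_1$ lands in a fixed point or a $2$-cycle after at most two iterates. This gives
\[f_1^4(Q_1)-f_1^2(Q_1)=0\qquad\text{and}\qquad f_1^4(Q_2)-f_1^2(Q_2)=0,\]
which, after clearing denominators, become polynomials $E_1(\textbf{y},\textbf{t}),E_2(\textbf{y},\textbf{t})\in\mathbb{Q}[\textbf{y},\textbf{t}]$ that vanish at every $(y,t)$ attached to a finite-orbit tuple as in Lemma \ref{lem:13}; should the resulting solution set fail to be finite, I would adjoin the analogous relation for $Q_3$, or the relation forcing $f_1(Q_1)$ back into the $3$-cycle of $f_2$.

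I would then compute a Gr\"{o}bner basis of the ideal $I=(E_1,E_2)\subseteq\mathbb{Q}[\textbf{y},\textbf{t}]$ with {\tt{Magma}} and isolate an elimination polynomial of the shape $G(\textbf{t})$ times factors cutting out degenerate loci — poles of the parametrization and components forcing $c_1=c_2$ (i.e.\ $f_1=f_2$) — all of which I discard. It then remains to list the finitely many rational roots of $G(\textbf{t})$ and, for each, the admissible $y$-values read off from the remaining basis elements. I expect only the root $t=1$ to survive (it gives $c_2=-\frac{29}{16}$ and the $3$-cycle $\{-\tfrac14,-\tfrac74,\tfrac54\}$), forcing $y=\pm\tfrac52$ and hence $c_1=-\frac{21}{16}$; the spurious roots should be eliminated because they produce poles, collapse to $c_1=c_2$, or fail the residual basis relations. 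Finally, to confirm that this pair is genuinely realizable, I would check that the set $\{\pm\tfrac14,\pm\tfrac54,\pm\tfrac74\}$ is stable under both $f_1=x^2-\tfrac{21}{16}$ and $f_2=x^2-\tfrac{29}{16}$, so that it carries a finite orbit for $S$.

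The main obstacle is the elimination step: I must verify that $V(E_1,E_2)$ is genuinely zero-dimensional once the degenerate components are removed, so that $G(\textbf{t})$ has only finitely many rational roots and each one other than $t=1$ is ruled out. Conceptually the delicate input is the tail-length bound from \cite{Poonen}: one must use that $f_1$, having a rational fixed point (so $c_1\neq-\frac{29}{16}$), drives preperiodic points into a fixed point or $2$-cycle within two steps, which is precisely what makes $f_1^4(Q_i)-f_1^2(Q_i)=0$ the correct relation — a mis-applied bound would either discard the genuine solution or admit spurious ones.
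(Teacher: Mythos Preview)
Your approach is essentially the paper's: same parametrizations, and your relations $E_1,E_2$ coincide with the paper's $N,A$ (your $Q_1,Q_2$ are its $P_1,\,f_2(P_1)$). Two of your expectations need adjusting, though. First, the elimination polynomial $G(\textbf{t})$ has rational roots $t\in\{1,-2,-1/2\}$, not just $t=1$; all three parametrize the same $c_2=-29/16$ (they permute the $3$-cycle), so all three survive rather than being ``spurious.'' Second, and more importantly, the remaining basis elements then allow $y\in\{\pm 3/2,\pm 5/2\}$, not only $\pm 5/2$: the candidate $(c_1,c_2)=(-5/16,-29/16)$ coming from $y=\pm 3/2$ satisfies all of your Gr\"obner relations and is \emph{not} eliminated by any of the criteria you list (poles, $c_1=c_2$, or residual basis failure). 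The paper disposes of it by one further orbital check --- applying $f_2$ again to $P_1$ and verifying that the resulting point violates $f_1^4(\cdot)=f_1^2(\cdot)$, hence cannot be $f_1$-preperiodic. So plan to run an extra such test after the elimination; otherwise your outline is the paper's proof.
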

\begin{proof} Since $f_1$ has a rational fixed point, we may write $c_1=(1-y^2)/4$ for some $y\in\mathbb{Q}$. Likewise, since $f_2$ has has a rational point of period three, we may parametrize $f_2$ and the $3$-cycle $(c_2,P_1,P_2,P_3)$ as in (\ref{3cycle}); however, we use the variable $t$ to parametrize $f_2$ and the $3$-cycle in this case. Now suppose that $P\in\mathbb{Q}$ has finite orbit for $S$. Then $P$ is preperiodic for $f_2$. In particular, it follows from Theorems 1, 2, and 3 of \cite{Poonen} that the $f_2$-orbit of $P$ contains the $3$-cycle $P_1$,$P_2$ and $P_3$; here we use that $\mu_S(\mathbb{Q})\leq3$. Hence, the $S$-orbit of $P$ contains $P_1$. Therefore, we may assume that $P=P_1$ has finite orbit for $S$. 

Now, since $P_1$ has finite orbit for $S$, it follows that $P_1$ is preperiodic for $f_1$. Hence, Theorems 1, 2, and 3 of \cite{Poonen} imply that $f_1^4(P_1)=f_1^2(P_1)$; here we use that $f_1$ has a fixed point and that $\mu_S(\mathbb{Q})\leq3$. Likewise, $Q:=f_2(P_1)$ is preperiodic for $f_1$ and $f_1^4(Q)=f_1^2(Q)$. Now let $N(\textbf{y},\textbf{t})$ be the polynomial in two variables given by the numerator of the rational expression $f_1^4(P_1)-f_1^2(P_1)$, and let $A(\textbf{y},\textbf{t})$ be the numerator of the expression $f_1^4(Q)-f_1^2(Q)$. In particular, we have that 
\[N(y,t)=0\;\;\; \text{and}\;\;\; A(y,t)=0,\] 
for all pairs $(y,t)$ associated to finite orbit tuples $(c_1,c_2,P)$ as in Lemma \ref{lem:13}. On the other hand, we compute a Gr\"{o}bner basis $\{B_1(\textbf{y},\textbf{t}),B_2(\textbf{y},\textbf{t}),\dots B_6(\textbf{y},\textbf{t})\}$ for the $\mathbb{Q}[\textbf{y},\textbf{t}]$ ideal $I$ generated by $N(\textbf{y},\textbf{t})$ and $A(\textbf{y},\textbf{t})$. In particular, we see that $I$ contains a polynomial $G(\textbf{t})\in\mathbb{Q}[\textbf{t}]$ (in $\textbf{t}$ alone) of degree $68$. In particular, if $(y,t)$ is associated to a finite orbit tuple $(c_1,c_2,P)$ as in Lemma \ref{lem:13}, then $G(t)=0$. However, we compute with {\tt{Magma}} that the only rational roots of $G$ are $t=1,-2,-1/2$. On the other hand, the polynomials $B_2(\textbf{y},1), B_2(\textbf{y},-2)$, $B_2(\textbf{y},-1/2)\in\mathbb{Q}[\textbf{y}]$, in $\textbf{y}$ alone, must also vanish at $y$. Therefore, after computing the possible rational roots of these polynomials with {\tt{Magma}}, we deduce that 
\[y\in\{\pm{3/2},\pm{5/2}\}\;\;\; \text{and}\;\;\; t\in\{1,-1/2,-2\}.\]  
Finally, we must have that $B_i(y,t)=0$ for all $1\leq i\leq 6$ for these $12$ possible pairs $(y,t)$. In particular, we obtain possible finite orbit tuples $(c_1,c_2,P)=(-5/16,-29/16, 5/4)$ and $(c_1,c_2,P)=(-21/16, -29/16,P)$ for several $P$'s, including $P=-1/4$, which we check with {\tt{Magma}} is a point of finite orbit for $(c_1,c_2)=(-21/16, -29/16)$. Moreover, we can rule out the tuple $(c_1,c_2,P)=(-5/16,-29/16, 5/4)$, since in this case the point $Q:=f_2(f_2(Q))$ satisfies $f_1^4(Q)\neq f_1^2(Q)$, and hence $Q$ cannot be preperiodic for $f_1$; see \cite{Poonen}. In particular, $P$ cannot have finite orbit for $S$, a contradiction.   
\end{proof} 
Finally, we prove a classification result for sets of quadratic polynomials $S=\{f_1,f_2\}$ possessing an unspecified finite orbit point when $f_1$ has a rational point of period two and $f_2$ has a rational point of period three.
\begin{lemma}{\label{lem:23}} Let $S=\{f_1, f_2\}$ with $f_1=x^2+c_1$ and $f_2=x^2+c_2$ for some distinct $c_1,c_2\in\mathbb{Q}$. If $S$ has the following properties:     
\begin{enumerate}[topsep=7pt, partopsep=7pt, itemsep=8pt]
\item[\textup{(1)}] $\mu_S(\mathbb{Q})\leq3$,  
\item[\textup{(2)}] $f_1$ has a rational point of period two and $f_2$ has a rational point of period three, 
\item[\textup{(3)}] there exists $P\in\mathbb{Q}$ with finite orbit for $S$,    
\end{enumerate}  
then $c_1=-\frac{21}{16}$ and $c_2=-\frac{29}{16}$.     
\end{lemma}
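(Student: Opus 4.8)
The plan is to mirror the argument of Lemma \ref{lem:13}, replacing the fixed-point parametrization of $f_1$ with the period-two parametrization. Since $f_1=x^2+c_1$ has a rational point of period two, I would write $c_1=-(3+y^2)/4$ for some $y\in\mathbb{Q}$, exactly as in the proof of Lemma \ref{lem:12}; and since $f_2=x^2+c_2$ has a rational point of period three, I would parametrize $f_2$ together with its unique $3$-cycle $(c_2,P_1,P_2,P_3)$ as in (\ref{3cycle}), using the variable $t$. Now suppose $P\in\mathbb{Q}$ has finite orbit for $S$. Then $P$ is preperiodic for $f_2$, and since $\mu_S(\mathbb{Q})\leq3$ and $f_2$ has a rational $3$-cycle, Theorems 1, 2, and 3 of \cite{Poonen} imply that the $f_2$-orbit of $P$ contains the $3$-cycle. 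Hence the $S$-orbit of $P$ contains $P_1$, so $P_1$ itself has finite orbit for $S$, and we may assume $P=P_1$.

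Next I would extract the necessary polynomial conditions. Because $P_1$ has finite orbit for $S$, it is preperiodic for $f_1$; and since $f_1$ has a rational point of period two and $\mu_S(\mathbb{Q})\leq3$, the classification of \cite{Poonen} forces every rational preperiodic point of $f_1$ into a fixed point or $2$-cycle within two iterates, whence $f_1^4(P_1)=f_1^2(P_1)$. Likewise $Q:=f_2(P_1)=P_2$ lies in the $S$-orbit of $P_1$, so it too is preperiodic for $f_1$ and satisfies $f_1^4(Q)=f_1^2(Q)$. Writing $N(\textbf{y},\textbf{t})$ and $A(\textbf{y},\textbf{t})$ for the numerators of $f_1^4(P_1)-f_1^2(P_1)$ and $f_1^4(Q)-f_1^2(Q)$ in $\mathbb{Q}[\textbf{y},\textbf{t}]$, every pair $(y,t)$ attached to a finite orbit tuple $(c_1,c_2,P)$ as in Lemma \ref{lem:23} must satisfy $N(y,t)=A(y,t)=0$.

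I would then compute a Gr\"{o}bner basis $\{B_1(\textbf{y},\textbf{t}),\dots,B_k(\textbf{y},\textbf{t})\}$ for the ideal $I=(N,A)\subseteq\mathbb{Q}[\textbf{y},\textbf{t}]$ with {\tt{Magma}}, and isolate from it a nonzero polynomial $G(\textbf{t})\in\mathbb{Q}[\textbf{t}]$ in $t$ alone. Any finite orbit pair satisfies $G(t)=0$, cutting $t$ down to finitely many rational values; for each such $t$ the specializations $B_i(\textbf{y},t)\in\mathbb{Q}[\textbf{y}]$ then determine the finitely many admissible $y$. Back-substituting the surviving $(y,t)$ into the parametrizations produces a short list of candidate tuples $(c_1,c_2,P)$, from which I expect the only genuine solution to be $c_1=-21/16$ and $c_2=-29/16$; note that $x^2-21/16$ does carry the rational $2$-cycle $\{1/4,-5/4\}$, while $c_2=-29/16$ is precisely the special $3$-cycle parameter that already surfaced as the value of $c_2$ in Lemma \ref{lem:13}.

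The main obstacle, just as in Lemma \ref{lem:13}, is that the two conditions $N=A=0$ are only necessary, so the candidate list will contain spurious tuples that pass this test without actually having finite orbit for $S$; along the way one must also discard any candidate forcing $c_1=c_2$ or an infinite parameter value. These spurious tuples should be eliminated by pushing the $S$-orbit one or two steps further and testing a deeper preperiodicity relation — for instance checking whether a later iterate such as $f_2^2(Q)$ remains preperiodic for $f_1$, i.e.\ whether $f_1^4=f_1^2$ still holds there — exactly the refinement used to discard $(-5/16,-29/16,5/4)$ in the proof of Lemma \ref{lem:13}. Confirming that all such near-misses fail, while $(-21/16,-29/16)$ genuinely admits a finite orbit point, is the crux of the computation.
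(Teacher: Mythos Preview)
Your proposal is correct and follows essentially the same approach as the paper: parametrize $c_1=-(3+y^2)/4$ and $(c_2,P_1,P_2,P_3)$ via (\ref{3cycle}) in $t$, reduce to $P=P_1$, impose $f_1^4(P_1)=f_1^2(P_1)$ and $f_1^4(f_2(P_1))=f_1^2(f_2(P_1))$, compute a Gr\"{o}bner basis to isolate a univariate $G(\textbf{t})$ (degree $68$, rational roots $t\in\{1,-2,-1/2\}$), back-solve for $y$, and then discard the spurious candidates $(c_1,c_2)=(-37/16,-29/16)$ and $(-13/16,-29/16)$ by a further preperiodicity check. One small caution: your suggested extra test point $f_2^2(Q)=f_2^3(P_1)=P_1$ cycles back and gives no new information; the paper instead uses $f_2^2(P_1)=P_3$, which is the third $3$-cycle point not yet constrained.
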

\begin{proof}
Since $f_1$ has a rational point of period two, we may write $c_1=-(3+y^2)/4$ for some $y\in\mathbb{Q}$. Likewise, since $f_2$ has a rational point of period three, we may parametrize $f_2$ and the $3$-cycle $(c_2,P_1,P_2,P_3)$ as in (\ref{3cycle}); however, we use the variable $t$ to parametrize $f_2$ and the $3$-cycle in this case. Now suppose that $P\in\mathbb{Q}$ has finite orbit for $S$. Then $P$ is preperiodic for $f_2$. In particular, it follows from Theorems 1, 2, and 3 of \cite{Poonen} that the $f_2$-orbit of $P$ contains the $3$-cycle $P_1$,$P_2$ and $P_3$; here we use that $\mu_S(\mathbb{Q})\leq3$. Hence, the $S$-orbit of $P$ contains $P_1$. Therefore, we may assume that $P=P_1$ has finite orbit for $S$.  

Now, since $P_1$ has finite orbit for $S$, it follows that $P_1$ is preperiodic for $f_1$. Hence, Theorems 1, 2, and 3 of \cite{Poonen} imply that $f_1^4(P_1)=f_1^2(P_1)$; here we use that $f_1$ has a $2$-cycle and that $\mu_S(\mathbb{Q})\leq3$. Likewise, $Q:=f_2(P_1)$ is preperiodic for $f_1$ and $f_1^4(Q)=f_1^2(Q)$. Now let $N(\textbf{y},\textbf{t})$ be the polynomial in two variables given by the numerator of the rational expression $f_1^4(P_1)-f_1^2(P_1)$, and let $A(\textbf{y},\textbf{t})$ be the numerator of the expression $f_1^4(Q)-f_1^2(Q)$. In particular, we have that 
\[N(y,t)=0\;\;\; \text{and}\;\;\; A(y,t)=0,\] 
for all pairs $(y,t)$ associated to finite orbit tuples $(c_1,c_2,P)$ as in Lemma \ref{lem:23}. On the other hand, we compute a Gr\"{o}bner basis $\{B_1(\textbf{y},\textbf{t}),B_2(\textbf{y},\textbf{t}),\dots B_6(\textbf{y},\textbf{t})\}$ for the $\mathbb{Q}[\textbf{y},\textbf{t}]$ ideal $I$ generated by $N(\textbf{y},\textbf{t})$ and $A(\textbf{y},\textbf{t})$. In particular, we see that $I$ contains a polynomial $G(\textbf{t})\in\mathbb{Q}[\textbf{t}]$ (in $\textbf{t}$ alone) of degree $68$. In particular, if $(y,t)$ is associated to a finite orbit tuple $(c_1,c_2,P)$ as in Lemma \ref{lem:23}, then $G(t)=0$. However, we compute with {\tt{Magma}} that the only rational roots of $G$ are $t=1,-2,-1/2$. On the other hand, the polynomials $B_2(\textbf{y},1), B_2(\textbf{y},-2)$, $B_2(\textbf{y},-1/2)\in\mathbb{Q}[\textbf{y}]$, in $\textbf{y}$ alone, must also vanish at $y$. Therefore, after computing the possible rational roots of these polynomials with {\tt{Magma}}, we deduce that 
\[y\in\{\pm{1/2},\pm{3/2},\pm{5/2}\}\;\;\; \text{and}\;\;\; t\in\{1,-1/2,-2\}.\] 
Finally, we must have that $B_i(y,t)=0$ for all $1\leq i\leq 6$ for these $18$ possible pairs $(y,t)$. In particular, we obtain possible finite orbit tuples $(c_1,c_2,P)=(-37/16, -29/16, -7/4)$, $(c_1,c_2,P)=(-13/16, -29/16)$ and $(c_1,c_2,P)=(-21/16, -29/16,P)$ for several $P$'s, including $P=-1/4$, which we check with {\tt{Magma}} is a point of finite orbit for $(c_1,c_2)=(-21/16, -29/16)$. Moreover, we can rule out the tuples $(c_1,c_2,P)=(-5/16,-29/16, 5/4)$ and $(c_1,c_2,P)=(-13/16, -29/16)$. For in both cases, the points $Q_1=f_2^2(P)$ satisfy $f_1^4(Q)\neq f_1^2(Q)$, and hence $Q$ cannot be preperiodic for $f_1$; see \cite{Poonen}. In particular, $P$ cannot have finite orbit for $S$, a contradiction. 
\end{proof} 
Having classified certain pairs of quadratic polynomials with finite orbit points, we use these results to prove our main theorem. 
\section{Proof of Main Theorem} 
\begin{proof}[(Proof of Theorem \ref{thm:classification})] Suppose that $S=\{x^2+c_1,x^2+c_2,x^2+c_3\}$ for some distinct rational numbers $c_i$, that $\mu_S(\mathbb{Q})\leq3$, and that $P\in\mathbb{Q}$ is a finite orbit point for $S$. In particular, the orbit of $P$ is preperiodic for each individual map in $S$. Hence, the orbit of $P$ enters a fixed point, a $2$-cycle, or a $3$-cycle for each map is $S$ (since $\mu_S(\mathbb{Q})\leq3$ by assumption). In what follows, we write $\phi_i(x)=x^2+c_i$ for all $1\leq i\leq3$. From here we proceed in cases. For the relevant codes, see the file named \emph{Finite Orbit\,-\,Subcases} at the website mentioned in the introduction.  
\\[5pt]
\textbf{Case(1):} Suppose that the orbit of $P$ enters a fixed point for each individual map in $S$. In particular, each map in $S$ has a rational fixed point, and by replacing $P$ with $\phi_1^n(P)$ for some $n$, we may assume that a fixed point of $\phi_1$, renamed $P$, is a finite orbit point for both sets $S'=\{\phi_1,\phi_2\}$ and $S''=\{\phi_1,\phi_3\}$ simultaneously. Then the classification Lemma \ref{lem:11} leads us to several subcases:  
\\[7pt]
\emph{Subcase(1.1)}: The pair of tuples $(c_1, c_2, P)$ and $(c_1,c_3,P)$ take the form: 
\begin{align*}
(c_1, c_2,P)&=\bigg(\frac{1-y^2}{4},\;\frac{1-(y+2)^2}{4},\;\frac{1+y}{2}\bigg),\\[4pt]
(c_1, c_3,P)&=\bigg(\frac{1-u^2}{4},\;\frac{1-(u+2)^2}{4},\;\frac{1+u}{2}\bigg),
\end{align*} 
for some $y,u\in\mathbb{Q}$. Equating expressions for $P$, we deduce that $y=u$. In particular, it follows that $c_2=c_3$, a contradiction.     
\\[7pt]
\emph{Subcase(1.2)}: The pair of tuples $(c_1, c_2, P)$ and $(c_1,c_3,P)$ take the form:
\begin{align*}
(c_1, c_2,P)&=\bigg(\frac{1-y^2}{4},\frac{1-(y+2)^2}{4},\frac{1+y}{2}\bigg),\\[3pt] 
(c_1,c_3,P)&=\bigg(\frac{t^4 - 18t^2 + 1}{4(t^2 -1)^2},\frac{-3t^4 - 10t^2 -3}{4(t^2 -1)^2},\frac{t^2 + 4t - 1}{2(t^2 - 1)}\bigg),
\end{align*}  
for some $y,t\in\mathbb{Q}$. Equating expressions for $P$, we deduce that $y=4t/(t^2-1)$. Therefore, \vspace{.05cm} 
\begin{equation*}
(c_1,c_2,c_3,P)=\Scale[1.25]{\Big(\frac{t^4 - 18t^2 + 1}{4(t^2 -1)^2}, \frac{-3t^4 - 16t^3 - 10t^2 + 16t- 3}{4(t^2 -1)^2},\frac{-3t^4 - 10t^2 -3}{4(t^2 -1)^2},\frac{t^2 + 4t - 1}{2(t^2 - 1)}\Big)} \vspace{.05cm}   
\end{equation*}
is a finite orbit tuple for some $t\in\mathbb{Q}$. In particular, the point $Q:=(\phi_2\circ\phi_2\circ\phi_3\circ\phi_1\circ\phi_3\circ\phi_3)(P)$ is preperiodic for every map in $S$; hence, we may assume that $Q$ is preperiodic for $\phi_3$. However, Theorems 1, 2, and 3 of \cite{Poonen} together imply that $\phi_3^4(\alpha)=\phi^2(\alpha)$ for all $\alpha\in\PrePer(\phi_3,\mathbb{Q})$; equivalently, after two iterates of $\phi_3$, any rational preperiodic point of $\phi_3$ enters a $2$-cycle or a fixed point. We use crucially here our assumption that $\mu_S(\mathbb{Q})\leq3$. In particular, $\phi_3^4(Q)-\phi^2(Q)=0$. However, this relation forces $t$ to be a rational root of a very large degree polynomial, and we compute with {\tt{Magma}} that $t=0$ is the  only rational root of this polynomial. Therefore, $t=0$ and $c_2=-3/4=c_3$, a contradiction.   
\\[7pt]
\emph{Subcase(1.3)}: The pair of tuples $(c_1, c_2, P)$ and $(c_1,c_3)$ take the form:
\begin{align*} 
(c_1, c_2,P)=\bigg(\frac{1-y^2}{4},\frac{1-(y+2)^2}{4},\frac{1+y}{2}\bigg), \hspace{.5cm} (c_1, c_3)\in\bigg\{\Big(-\frac{21}{16},-\frac{5}{16}\Big), \Big(\frac{3}{16},\frac{-5}{16}\Big)\bigg\}, 
\end{align*}
for some $y\in\mathbb{Q}$. Equating expressions for $c_1$, we see that $y=\pm{5/2}$ or $y=\pm{1/2}$. If $y=5/2$, then $(c_1,c_2,c_3)=\big(-\frac{21}{16},-\frac{77}{16},-\frac{5}{16}\big)$. Let $P_1=7/4$ and $P_2=-3/4$  be the fixed points of $\phi_1$, one of which has finite orbit for $S$ by assumption. However, the points $Q_1=\phi_3(P_1)$ and $Q_2=\phi_2(P_2)$ satisfy $\phi_1^4(Q_i)\neq\phi_1^2(Q_i)$ for both $i=1,2$; hence, $Q_1$ and $Q_2$ cannot be preperiodic for $\phi_1$; see \cite{Poonen}. In particular, $Q_1$ and $Q_2$ are not finite orbit points for $S$. However, this contradicts the fact that either $P_1$ or $P_2$ has finite orbit for $S$. The cases $y=-5/2,\pm{1/2}$ follow similarly; see our {\tt{Magma}} code for Subcase 1.3.   
\\[7pt] 
\emph{Subcase(1.4)}: The pair of tuples $(c_1, c_2, P)$ and $(c_1,c_3,P)$ take the form: \vspace{.15cm}  
\begin{align*}
(c_1,c_2,P)&=\bigg(\frac{t^4 - 18t^2 + 1}{4(t^2 -1)^2},\, \frac{-3t^4 - 10t^2 -3}{4(t^2 -1)^2},\, \frac{t^2 + 4t - 1}{2(t^2 - 1)}\bigg),\\[4pt]
(c_1, c_3,P)&=\bigg(\frac{1-y^2}{4},\frac{1-(y+2)^2}{4},\frac{1+y}{2}\bigg),
\end{align*} 
for some $t,y\in\mathbb{Q}$. Interchanging $c_2$ and $c_3$, we may proceed as in Subcase 1.2.       
\\[7pt] 
\emph{Subcase(1.5)}: The pair of tuples $(c_1, c_2, P)$ and $(c_1,c_3,P)$ take the form: \vspace{.15cm} 
\begin{align*}
(c_1,c_2,P)&=\bigg(\frac{t^4 - 18t^2 + 1}{4(t^2 -1)^2},\, \frac{-3t^4 - 10t^2 -3}{4(t^2 -1)^2},\, \frac{t^2 + 4t - 1}{2(t^2 - 1)}\bigg),\\[5pt] 
(c_1,c_3,P)&=\bigg(\frac{u^4 - 18u^2 + 1}{4(u^2 -1)^2},\, \frac{-3u^4 - 10u^2 -3}{4(u^2 -1)^2},\, \frac{u^2 + 4u - 1}{2(u^2 - 1)}\bigg),
\end{align*}
for some $t,u\in\mathbb{Q}$. Equating expressions for $P$, we see that $u=t$ or $u=-1/t$. However, in either case, we obtain that $c_2=c_3$, a contradiction.              
\\[7pt] 
\emph{Subcase(1.6)}: The pair of tuples $(c_1, c_2, P)$ and $(c_1,c_3)$ take the form: \vspace{.15cm}
\begin{align*}
(c_1,c_2,P)&=\bigg(\frac{t^4 - 18t^2 + 1}{4(t^2 -1)^2},\, \frac{-3t^4 - 10t^2 -3}{4(t^2 -1)^2},\, \frac{t^2 + 4t - 1}{2(t^2 - 1)}\bigg),\\[5pt] 
(c_1, c_3)&\in\bigg\{\Big(-\frac{21}{16},-\frac{5}{16}\Big), \Big(\frac{3}{16},\frac{-5}{16}\Big)\bigg\}, 
\end{align*} 
for some $t\in\mathbb{Q}$. However, equating possible expressions for $c_1$, we obtain polynomials over $\mathbb{Q}$ with no rational roots, a contradiction.                 
\\[7pt] 
\emph{Subcase(1.7)}: The pair of tuples $(c_1, c_2)$ and $(c_1,c_3,P)$ take the form: \vspace{.15cm}
\[(c_1, c_2)\in\bigg\{\Big(-\frac{21}{16},-\frac{5}{16}\Big), \Big(\frac{3}{16},\frac{-5}{16}\Big)\bigg\}, \hspace{.5cm} (c_1, c_3,P)=\bigg(\frac{1-y^2}{4},\frac{1-(y+2)^2}{4},\frac{1+y}{2}\bigg),\] 
for some $y\in\mathbb{Q}$. However, by interchanging $c_2$ and $c_3$, we may proceed as in Subcase 1.3.                   
\\[7pt] 
\emph{Subcase(1.8)}: The pair of tuples $(c_1, c_2)$ and $(c_1,c_3,P)$ take the form: \vspace{.15cm} 
\begin{align*} 
(c_1, c_2)&\in\bigg\{\Big(-\frac{21}{16},-\frac{5}{16}\Big), \Big(\frac{3}{16},\frac{-5}{16}\Big)\bigg\}, \\[5pt] 
(c_1, c_3,P)&=\bigg(\frac{t^4 - 18t^2 + 1}{4(t^2 -1)^2},\, \frac{-3t^4 - 10t^2 -3}{4(t^2 -1)^2},\, \frac{t^2 + 4t - 1}{2(t^2 - 1)}\bigg),
\end{align*}  
for some $t\in\mathbb{Q}$. However, by interchanging $c_2$ and $c_3$, we may proceed as in Subcase 1.6.                   
\\[7pt] 
\emph{Subcase(1.9)}: The pair of tuples $(c_1,c_2)$ and $(c_1,c_3)$ take the form: 
\[(c_1, c_2)\in\bigg\{\Big(-\frac{21}{16},-\frac{5}{16}\Big), \Big(\frac{3}{16},\frac{-5}{16}\Big)\bigg\} \hspace{.5cm} (c_1, c_3)\in\bigg\{\Big(-\frac{21}{16},-\frac{5}{16}\Big), \Big(\frac{3}{16},\frac{-5}{16}\Big)\bigg\}.\vspace{.1cm}\]     
However in every cases, we see that $c_2=-5/16=c_3$, a contradiction.            
\\[7pt] 
\textbf{Case(2):} Suppose that the orbit of $P$ enters a fixed point for two maps in $S$, say $\phi_1$ and $\phi_2$, and a $2$-cycle for $\phi_3$. In particular, both $\phi_1$ and $\phi_2$ have rational fixed points and $\phi_3$ has a rational point of period $2$. Moreover, by replacing $P$ with $\phi_1^n(P)$ for some $n$, we may assume that a fixed point of $\phi_1$, renamed $P$, has finite orbit for both sets $S'=\{\phi_1, \phi_2\}$ and $S''=\{\phi_1,\phi_3\}$ simultaneously. Then Lemma \ref{lem:11} and Lemma \ref{lem:12} together imply several subcases: 
\\[7pt] 
\emph{Subcase(2.1)}: The pair of tuples $(c_1, c_2, P)$ and $(c_1,c_3,P)$ take the form: \vspace{.1cm}  
\begin{align*}
(c_1, c_2,P)&=\bigg(\frac{1-y^2}{4},\frac{1-(y+2)^2}{4},\frac{1+y}{2}\bigg), \\[4pt] 
(c_1,c_3,P)&=\bigg(\frac{1-u^2}{4},\,\frac{-3-u^2}{4},\, \frac{1+u}{2}\bigg),
\end{align*}  
for some $y,u\in\mathbb{Q}$. Equating expressions for $P$, we see that $y=u$. Hence, 
\[ (c_1,c_2,c_3,P)=\bigg(\frac{1-y^2}{4},\,\frac{1-(y+2)^2}{4},\,\frac{-3-y^2}{4},\,\frac{1+y}{2}\bigg)\]
for some $y\in\mathbb{Q}$. Now, if $P$ has finite orbit for $S$, then so does $Q:=(\phi_1\circ\phi_1\circ\phi_1\circ\phi_2\circ\phi_3)(P)$. In particular, $Q$ must be preperiodic for $\phi_2$. However, since $\mu_S(\mathbb{Q})\leq3$ and $\phi_2$ has a rational fixed point, Theorems 1, 2, and 3 of \cite{Poonen} together imply that $\phi_2^4(Q)=\phi_2^2(Q)$ (equivalently, any rational preperiodic point of $\phi_2$ enters a 2-cycle or fixed point after two iterates). However, the relation $\phi_2^4(Q)-\phi_2^2(Q)=0$ forces $y$ to be a root of a polynomial of large degree, and we compute with {\tt{Magma}} that the only rational roots of this polynomial are $y=0,-1,-1/2$. On the other hand, if $y=0$, then we see immediately that $c_2=c_3$, a contradiction. Similarly, if $y=-1$, then $c_1=c_2$, a contradiction. Finally, if we assume that $y=-1/2$, then $(c_1,c_2,c_3,P)=(3/16, -5/16, -13/16, 1/4)$, and it is straightforward to verify that this tuple has finite orbit.        
\\[7pt] 
\emph{Subcase(2.2)}: The pair of tuples $(c_1, c_2, P)$ and $(c_1,c_3,P)$ take the form: \vspace{.1cm} 
\begin{align*} 
(c_1, c_2,P)&=\bigg(\frac{1-y^2}{4},\frac{1-(y+2)^2}{4},\frac{1+y}{2}\bigg), \\[4pt] 
(c_1,c_3,P)&=\bigg(\frac{-15t^4 - 2t^2 + 1}{4(t^2 -1)^2},\, \frac{-3t^4 - 10t^2 -3}{4(t^2 -1)^2},\, \frac{-3t^2- 1}{2(t^2 - 1)}\bigg), 
\end{align*}  
for some $y,t\in\mathbb{Q}$. Equating expressions for $P$, we see that $y=-4t^2/(t^2-1)$. However, this relation forces $c_2=c_3$, a contradiction. 
\\[7pt] 
\emph{Subcase(2.3)}: The pair of tuples $(c_1, c_2, P)$ and $(c_1,c_3)$ take the form:
\[(c_1, c_2,P)=\bigg(\frac{1-y^2}{4},\frac{1-(y+2)^2}{4},\frac{1+y}{2}\bigg), \hspace{.5cm} (c_1, c_3)=\Big(-\frac{5}{16},-\frac{13}{16}\Big),\] 
for some $y\in\mathbb{Q}$. Equating expressions for $c_1$, we see that $y=\pm{3/2}$. Suppose first that $y=3/2$, so that $(c_1,c_2,c_3,P)=(-\frac{21}{16},-\frac{77}{16},-\frac{13}{16},\frac{5}{4})$. However, since $P$ has finite orbit for $S$, so does $Q:=(\phi_1\circ\phi_2\circ\phi_3)(P)$. Hence, $Q$ must be preperiodic for $\phi_2$. In particular, since $\phi_2$ has a rational fixed point and $\mu_S(\mathbb{Q})\leq3$, it follows from Theorems 1, 2 and 3 of \cite{Poonen} that $\phi_2^4(Q)=\phi_2^2(Q)$. However, this is not the case. On the other hand, if $y=-3/2$, then $(c_1,c_2,c_3)=(-5/16, 3/16, -13/16)$. However, this tuple is known to have a finite orbit rational point; see Subcase 2.1 above.    
\\[7pt] 
\emph{Subcase(2.4)}: The pair of tuples $(c_1, c_2, P)$ and $(c_1,c_3)$ take the form:
\[(c_1, c_2,P)=\bigg(\frac{1-y^2}{4},\frac{1-(y+2)^2}{4},\frac{1+y}{2}\bigg), \hspace{.5cm} (c_1, c_3)=\Big(-\frac{21}{16},-\frac{13}{16}\Big),\] 
for some $y\in\mathbb{Q}$. Equating expressions for $c_1$, we see that $y=\pm{5/2}$. Suppose first that $y=5/2$, so that $(c_1,c_2,c_3,P)=(-\frac{5}{16},-\frac{45}{16},-\frac{13}{16},\frac{7}{4})$. However, since $P$ has finite orbit for $S$, so does $Q:=\phi_3(P)$. Hence, $Q$ must be preperiodic for $\phi_2$. In particular, since $\phi_2$ has a rational fixed point and $\mu_S(\mathbb{Q})\leq3$, it follows from Theorems 1, 2 and 3 of \cite{Poonen} that $\phi_2^4(Q)=\phi_2^2(Q)$. However, this is not the case. Likewise, if $y=-5/2$, then $(c_1,c_2,c_3,P)=(-\frac{21}{16},\frac{3}{16},-\frac{13}{16},-\frac{3}{4})$ and $Q:=(\phi_1\circ\phi_3)(P)$ must be preperiodic for $\phi_2$. However, this is not the case, since $\phi_2^4(Q)\neq\phi_2^2(Q)$.  
\\[7pt] 
\emph{Subcase(2.5)}: The pair of tuples $(c_1, c_2, P)$ and $(c_1,c_3,P)$ take the form: \vspace{.1cm}  
\begin{align*}
(c_1, c_2,P)&=\bigg(\frac{t^4 - 18t^2 + 1}{4(t^2 -1)^2},\, \frac{-3t^4 - 10t^2 -3}{4(t^2 -1)^2},\, \frac{t^2 + 4t - 1}{2(t^2 - 1)}\bigg), \\[4pt] 
(c_1,c_3,P)&=\bigg(\frac{1-u^2}{4},\,\frac{-3-u^2}{4},\, \frac{1+u}{2}\bigg),
\end{align*}  
for some $t,u\in\mathbb{Q}$. Equating expressions for $P$, we see that $u=4t/(t^2-1)$. However, this relation forces $c_2=c_3$, a contradiction.  
\\[7pt]
\emph{Subcase(2.6)}: The pair of tuples $(c_1, c_2, P)$ and $(c_1,c_3,P)$ take the form: \vspace{.1cm} 
\begin{align*} 
(c_1, c_2,P)&=\bigg(\frac{t^4 - 18t^2 + 1}{4(t^2 -1)^2},\frac{-3t^4 - 10t^2 -3}{4(t^2 -1)^2},\frac{t^2 + 4t - 1}{2(t^2 - 1)}\bigg), \\[4pt] 
(c_1,c_3,P)&=\bigg(\frac{-15u^4 - 2u^2 + 1}{4(u^2 -1)^2},\, \frac{-3u^4 - 10u^2 -3}{4(u^2 -1)^2},\, \frac{-3u^2- 1}{2(u^2 - 1)}\bigg), 
\end{align*}  
for some $t,u\in\mathbb{Q}$. Equating expressions for $P$, we see that the pair $(t,u)$ is a rational point on the affine curve 
\[C: t^2u^2 + tu^2 - t - u^2=0.\]
However, the map $r: (t,u)\rightarrow((-tu^2 + 1)/u^2,\;(tu^2 + u^2 - 1)/u^3)$ determines a degree one-map to the elliptic curve $E$ with Weierstrass equation $E: y^2 = x^3 - 2x^2 + 1$. Moreover, we compute with {\tt{Magma}} that $E(\mathbb{Q})$ has rank zero, that $E(\mathbb{Q})\cong\mathbb{Z}/6\mathbb{Z}$, and that 
\[E(\mathbb{Q})=\{\infty, (1,0), (0,\pm{1}), (2,\pm{1})\}.\] 
On the other hand, it is straightforward to check that there are no points $p\in C(\mathbb{Q})$ such that $r(p)=(1,0)$, and we deduce that 
\[C(\mathbb{Q})=\{(0,0), (\pm{1},\pm{1})\}.\]
On the other hand, if $(t,u)\in\{(\pm{1},\pm{1})\}$, then at least one of the values $c_1, c_2$ or $c_3$ is infinite, a contradiction. Finally, if $(t,u)=(0,0)$, then $c_2=-3/4=c_3$, a contradiction.   
\\[7pt] 
\emph{Subcase(2.7)}: The pair of tuples $(c_1, c_2, P)$ and $(c_1,c_3)$ take the form:
\[(c_1, c_2,P)=\bigg(\frac{t^4 - 18t^2 + 1}{4(t^2 -1)^2},\frac{-3t^4 - 10t^2 -3}{4(t^2 -1)^2},\frac{t^2 + 4t - 1}{2(t^2 - 1)}\bigg), \hspace{.5cm} (c_1, c_3)=\Big(-\frac{5}{16},-\frac{13}{16}\Big),\]  
for some $t\in\mathbb{Q}$. Equating expressions for $c_1$, we see that $t\in\{\pm{3},\pm{1/3}\}$. On the other hand, in all four cases, $(c_1, c_2,c_3)=(-\frac{5}{16}, -\frac{21}{16}, -\frac{13}{16})$, and it is straight forward to check that this tuple has finite orbit rational points $P\in\big\{\pm{1/4},\pm{3/4},\pm{5/4}\big\}$. Moreover, these are the only finite orbit rational points for this tuple.   
\\[7pt]
\emph{Subcase(2.8)}: The pair of tuples $(c_1, c_2, P)$ and $(c_1,c_3)$ take the form:
\[(c_1, c_2,P)=\bigg(\frac{t^4 - 18t^2 + 1}{4(t^2 -1)^2},\frac{-3t^4 - 10t^2 -3}{4(t^2 -1)^2},\frac{t^2 + 4t - 1}{2(t^2 - 1)}\bigg), \hspace{.5cm} (c_1, c_3)=\Big(-\frac{21}{16},-\frac{13}{16}\Big),\]  
for some $t\in\mathbb{Q}$. However, there are no rational solutions $t$ to the equation $\frac{t^4 - 18t^2 + 1}{4(t^2 -1)^2}=-\frac{21}{16}$, and we obtain a  contradiction.   
\\[7pt]
\emph{Subcase(2.9)}: The pair of tuples $(c_1, c_2)$ and $(c_1,c_3,P)$ take the form: \vspace{.15cm}
\[(c_1, c_2)\in\bigg\{\Big(-\frac{21}{16},-\frac{5}{16}\Big), \Big(\frac{3}{16},\frac{-5}{16}\Big)\bigg\}, \hspace{.5cm} (c_1,c_3,P)=\bigg(\frac{1-u^2}{4},\,\frac{-3-u^2}{4},\, \frac{1+u}{2}\bigg),\] 
for some $u\in\mathbb{Q}$. Equating expressions for $c_1$, we see that $u=\pm{5/2},\pm{1/2}$. If $u=5/2$, then $(c_1,c_2,c_3,P)=(-\frac{21}{16},-\frac{5}{16},-\frac{37}{16}, \frac{7}{4})$. However, if $P$ has finite orbit for $S$, then so does $Q:=\phi_2(P)$. In particular, $Q$ must be preperiodic for $\phi_3$. Hence, Theorems 1, 2, and 3 of \cite{Poonen} together imply that $\phi_3^4(Q)=\phi_3^2(Q)$. However, this is not the case. Likewise, if $u=-5/2$, then $(c_1,c_2,c_3,P)=(-\frac{21}{16},-\frac{5}{16},-\frac{37}{16}, -\frac{3}{4})$, and the point $Q:=\phi_2(P)$ must satisfy $\phi_3^4(Q)=\phi_3^2(Q)$; again, this is not the case. On the other hand, if $u=\pm{1/2}$, then $(c_1,c_2,c_3)=(\frac{3}{16},-\frac{5}{16},-\frac{13}{16})$ and $P=1/4$ or $P=3/4$. Moreover, it is straightforward to check that $P$ has finite orbit for $S$ in both cases.  
\\[7pt] 
\emph{Subcase(2.10)}: The pair of tuples $(c_1, c_2)$ and $(c_1,c_3,P)$ take the form: \vspace{.15cm}
\begin{align*} 
(c_1, c_2)&\in\bigg\{\Big(-\frac{21}{16},-\frac{5}{16}\Big), \Big(\frac{3}{16},\frac{-5}{16}\Big)\bigg\},\\[5pt]
(c_1,c_3, P)&=\bigg(\frac{-15t^4 - 2t^2 + 1}{4(t^2 -1)^2},\, \frac{-3t^4 - 10t^2 -3}{4(t^2 -1)^2},\, \frac{-3t^2- 1}{2(t^2 - 1)}\bigg),
\end{align*} 
for some $t\in\mathbb{Q}$. However, there are no rational solutions $t$ to the equations $\frac{-15t^4 - 2t^2 + 1}{4(t^2 -1)^2}=-\frac{21}{16}$. Hence, the case of $(c_1,c_2)=(-21/16,-5/16)$ is impossible. On the other hand, the equation $\frac{-15t^4 - 2t^2 + 1}{4(t^2 -1)^2}=\frac{3}{16}$ forces $t=\pm{1/3}$. Therefore, we may assume that the tuple $S$ and the point $P$ are given by $(c_1,c_2,c_3,P)=(\frac{3}{16}, -\frac{5}{16}, -\frac{21}{16}, \frac{3}{4})$. In this case, if $P$ has finite orbit for $S$, then so does $Q:=\phi_3(\phi_2(P))$. In particular, $Q$ must be preperiodic for $\phi_1$. Hence, Theorems 1, 2, and 3 of \cite{Poonen} imply that $\phi_1^4(Q)=\phi_1^2(Q)$. However, this is not the case, a contradiction.  
\\[7pt] 
\emph{Subcase(2.11)}: The pair of tuples $(c_1, c_2)$ and $(c_1,c_3)$ take the form: \vspace{.15cm}
\begin{align*}
(c_1, c_2)&\in\bigg\{\Big(-\frac{21}{16},-\frac{5}{16}\Big), \Big(\frac{3}{16},\frac{-5}{16}\Big)\bigg\},\\[5pt]
(c_1,c_3)&\in\bigg\{\Big(-\frac{5}{16},-\frac{13}{16}\Big),\Big(-\frac{21}{16},-\frac{13}{16}\Big) \bigg\}.  
\end{align*} 
However, after equating possible expressions for $c_1$, we see that $(c_1,c_2,c_3)=(-\frac{21}{16},-\frac{5}{16},-\frac{13}{16})$ is the only case to be considered. Moreover, this tuple is known (up to reordering) to have rational points with finite orbit.       
\\[7pt]   
\textbf{Case(3):} Suppose that the orbit of $P$ enters a fixed point for two of the maps in $S$, say $\phi_1$ and $\phi_2$, and a $3$-cycle for $\phi_3$. Then in particular, both $\phi_1$ and $\phi_2$ have rational fixed points and $\phi_3$ has a rational point of period $3$. Moreover, $P$ has finite orbit for both sets $S'=\{\phi_1, \phi_3\}$ and $S''=\{\phi_2,\phi_3\}$ simultaneously. Hence, Lemma \ref{lem:13} applied to $S'$ implies that $c_1=-21/16$ and $c_3=-29/16$. Likewise, Lemma \ref{lem:13} applied to $S''$ implies that $c_2=-21/16$ and $c_3=-29/16$. Therefore, we deduce that $c_1=c_2$, a contradiction.     
\\[7pt]
\textbf{Case(4):} Suppose that the orbit of $P$ enters a fixed point for only one map in $S$, say $\phi_1$, and enters a $2$-cycle for the others, $\phi_2$ and $\phi_3$. In particular, $\phi_1$ has a rational fixed point and $\phi_2$ and $\phi_3$ have rational points of period $2$. Moreover, by replacing $P$ with $\phi_1^n(P)$ for some $n$, we may assume that a fixed point of $\phi_1$, renamed $P$, has finite orbit for both sets $S'=\{\phi_1,\phi_2\}$ and $S''=\{\phi_1,\phi_3\}$ simultaneously. Then the classification Lemma \ref{lem:12} implies several subcases: 
\\[7pt]
\emph{Subcase(4.1)}: The pair of tuples $(c_1, c_2, P)$ and $(c_1,c_3,P)$ take the form: \vspace{.1cm}  
\begin{align*}
(c_1, c_2,P)&=\bigg(\frac{1-y^2}{4},\,\frac{-3-y^2}{4},\, \frac{1+y}{2}\bigg), \\[4pt] 
(c_1,c_3,P)&=\bigg(\frac{1-u^2}{4},\,\frac{-3-u^2}{4},\, \frac{1+u}{2}\bigg),
\end{align*}  
for some $y,u\in\mathbb{Q}$. Equating expressions for $P$, we see that $y=u$. However, this immediately implies that $c_2=c_3$, a contradiction.
\\[7pt]
\emph{Subcase(4.2)}: The pair of tuples $(c_1, c_2, P)$ and $(c_1,c_3,P)$ take the form: \vspace{.1cm}  
\begin{align*}
(c_1, c_2,P)&=\bigg(\frac{1-y^2}{4},\,\frac{-3-y^2}{4},\, \frac{1+y}{2}\bigg), \\[4pt] 
(c_1,c_3,P)&=\bigg(\frac{-15t^4 - 2t^2 + 1}{4(t^2 -1)^2},\, \frac{-3t^4 - 10t^2 -3}{4(t^2 -1)^2},\, \frac{-3t^2- 1}{2(t^2 - 1)}\bigg),
\end{align*}  
for some $y,t\in\mathbb{Q}$. Equating expressions for $P$, we see that $y=-4t^2/(t^2-1)$. In particular, \vspace{.1cm} 
\[(c_1,c_2,c_3,P)=\bigg(\frac{-15t^4 - 2t^2 + 1}{4(t^2 -1)^2},\, \frac{-19t^4 + 6t^2 - 3}{4(t^2 -1)^2} ,\, \frac{-3t^4 - 10t^2 -3}{4(t^2 -1)^2},\, \frac{-3t^2- 1}{2(t^2 - 1)}\bigg)\]
for some $t\in\mathbb{Q}$. On the other hand, since $P$ has finite orbit for $S$, so does $Q:=(\phi_1^2\circ\phi_2\circ\phi_3)(P)$. In particular, $Q$ must be preperiodic for $\phi_3$, and it follows from Theorems 1, 2, and 3 of \cite{Poonen} that $\phi_3^4(Q)-\phi_3^2(Q)=0$. However, the only rational solution to this equation (in one variable) is $t=0$. Therefore, we may assume that $t=0$. Hence, $(c_1,c_2,c_3,P)=(\frac{1}{4}, -\frac{3}{4}, -\frac{3}{4}, \frac{1}{2})$, and $c_2=c_3$, a contradiction.     
\\[7pt]
\emph{Subcase(4.3)}: The pair of tuples $(c_1, c_2,P)$ and $(c_1,c_3)$ take the form: \vspace{.15cm}
\[(c_1, c_2,P)=\bigg(\frac{1-y^2}{4},\,\frac{-3-y^2}{4},\, \frac{1+y}{2}\bigg), \hspace{.5cm} (c_1, c_3)=\Big(-\frac{5}{16},-\frac{13}{16}\Big).\]
Equating expressions for $c_1$, we see that $y=\pm{3/2}$; hence, $(c_1,c_2,c_3,P)=(-\frac{5}{16},-\frac{21}{16},-\frac{13}{16},\frac{5}{4})$ or $(c_1,c_2,c_3,P)=(-\frac{5}{16},-\frac{21}{16},-\frac{13}{16},-\frac{1}{4})$. Moreover, it is straightforward to check that these tuples do indeed have finite orbit.    
\\[7pt] 
\emph{Subcase(4.4)}: The pair of tuples $(c_1, c_2,P)$ and $(c_1,c_3)$ take the form: \vspace{.15cm}
\[(c_1, c_2,P)=\bigg(\frac{1-y^2}{4},\,\frac{-3-y^2}{4},\, \frac{1+y}{2}\bigg), \hspace{.5cm} (c_1, c_3)=\Big(-\frac{21}{16},-\frac{13}{16}\Big).\]
Equating expressions for $c_1$, we see that $y=\pm{5/2}$; hence, $(c_1,c_2,c_3,P)=(-\frac{21}{16},-\frac{37}{16},-\frac{13}{16},\frac{7}{4})$ or $(c_1,c_2,c_3,P)=(-\frac{21}{16},-\frac{37}{16},-\frac{13}{16},-\frac{3}{4})$. In either case, since $P$ is assumed to have finite orbit for $S$, the point $Q:=(\phi_2\circ\phi_3\circ\phi_2)(P)$ must have finite orbit for $S$. In particular, $Q$ must be preperiodic for $\phi_3$. Therefore, Theorems 1, 2 and 3 of \cite{Poonen} together imply that $\phi_3^4(Q)-\phi_3^2(Q)=0$. However, this is not true in either case. 
\\[7pt]
\emph{Subcase(4.5)}: The pair of tuples $(c_1, c_2, P)$ and $(c_1,c_3,P)$ take the form: \vspace{.1cm}  
\begin{align*}
(c_1, c_2,P)&=\bigg(\frac{-15t^4 - 2t^2 + 1}{4(t^2 -1)^2},\, \frac{-3t^4 - 10t^2 -3}{4(t^2 -1)^2},\, \frac{-3t^2- 1}{2(t^2 - 1)}\bigg), \\[4pt] 
(c_1,c_3,P)&=\bigg(\frac{1-y^2}{4},\,\frac{-3-y^2}{4},\, \frac{1+y}{2}\bigg),
\end{align*}  
for some $t,y\in\mathbb{Q}$. However, by interchanging $c_2$ and $c_3$, we may proceed as in Subcase 4.2 above.   
\\[7pt]
\emph{Subcase(4.6)}: The pair of tuples $(c_1, c_2, P)$ and $(c_1,c_3,P)$ take the form: \vspace{.1cm}  
\begin{align*}
(c_1, c_2,P)&=\bigg(\frac{-15t^4 - 2t^2 + 1}{4(t^2 -1)^2},\, \frac{-3t^4 - 10t^2 -3}{4(t^2 -1)^2},\, \frac{-3t^2- 1}{2(t^2 - 1)}\bigg), \\[4pt] 
(c_1,c_3,P)&=\bigg(\frac{-15u^4 - 2u^2 + 1}{4(u^2 -1)^2},\, \frac{-3u^4 - 10u^2 -3}{4(u^2 -1)^2},\, \frac{-3u^2- 1}{2(u^2 - 1)}\bigg),
\end{align*}
for some $t,u\in\mathbb{Q}$. Equating expressions for $P$, we see that $t=\pm{u}$. However, this immediately forces $c_2=c_3$, a contradiction.  
\\[7pt] 
\emph{Subcase(4.7)}: The pair of tuples $(c_1, c_2,P)$ and $(c_1,c_3)$ take the form: \vspace{.15cm}
\[(c_1, c_2,P)=\bigg(\frac{-15t^4 - 2t^2 + 1}{4(t^2 -1)^2},\, \frac{-3t^4 - 10t^2 -3}{4(t^2 -1)^2},\, \frac{-3t^2- 1}{2(t^2 - 1)}\bigg), \hspace{.5cm} (c_1, c_3)=\Big(-\frac{5}{16},-\frac{13}{16}\Big)\]
for some $t\in\mathbb{Q}$. However, equating expressions for $c_1$ we obtain an equation $\frac{-15t^4 - 2t^2 + 1}{4(t^2 -1)^2}=-\frac{5}{16}$ with no rational solutions, a contradiction.   
\\[7pt] 
\emph{Subcase(4.8)}: The pair of tuples $(c_1, c_2,P)$ and $(c_1,c_3)$ take the form: \vspace{.15cm}
\[(c_1, c_2,P)=\bigg(\frac{-15t^4 - 2t^2 + 1}{4(t^2 -1)^2},\, \frac{-3t^4 - 10t^2 -3}{4(t^2 -1)^2},\, \frac{-3t^2- 1}{2(t^2 - 1)}\bigg), \hspace{.5cm} (c_1, c_3)=\Big(-\frac{21}{16},-\frac{13}{16}\Big)\]
for some $t\in\mathbb{Q}$. However, equating expressions for $c_1$ we obtain an equation $\frac{-15t^4 - 2t^2 + 1}{4(t^2 -1)^2}=-\frac{21}{16}$ with no rational solutions, a contradiction. 
\\[7pt]
\emph{Subcase(4.9)}: The pair of tuples $(c_1, c_2)$ and $(c_1,c_3,P)$ take the form: \vspace{.15cm}
\[(c_1, c_2)=\Big(-\frac{5}{16},-\frac{13}{16}\Big), \hspace{.5cm} (c_1,c_3,P)=\bigg(\frac{1-y^2}{4},\,\frac{-3-y^2}{4},\, \frac{1+y}{2}\bigg),\]
for some $y\in\mathbb{Q}$. However, by interchanging $c_2$ and $c_3$, we may proceed as in Subcase 4.3 above.  
\\[7pt] 
\emph{Subcase(4.10)}: The pair of tuples $(c_1, c_2)$ and $(c_1,c_3,P)$ take the form: \vspace{.15cm}
\[(c_1, c_2)=\Big(-\frac{5}{16},-\frac{13}{16}\Big), \hspace{.5cm} (c_1,c_3,P)=\bigg(\frac{-15t^4 - 2t^2 + 1}{4(t^2 -1)^2},\, \frac{-3t^4 - 10t^2 -3}{4(t^2 -1)^2},\, \frac{-3t^2- 1}{2(t^2 - 1)}\bigg),\]
for some $t\in\mathbb{Q}$. However, by interchanging $c_2$ and $c_3$, we may proceed as in Subcase 4.7 above.  
\\[7pt] 
\emph{Subcase(4.11)}: The pair of tuples $(c_1, c_2)$ and $(c_1,c_3)$ take the form: \vspace{.15cm}
\[(c_1, c_2)=\Big(-\frac{5}{16},-\frac{13}{16}\Big), \hspace{.5cm} (c_1, c_3)=\Big(-\frac{5}{16},-\frac{13}{16}\Big).\]
However, in this case $c_2=c_3$, a contradiction. 
\\[7pt] 
\emph{Subcase(4.12)}: The pair of tuples $(c_1, c_2)$ and $(c_1,c_3)$ take the form: \vspace{.15cm}
\[(c_1, c_2)=\Big(-\frac{5}{16},-\frac{13}{16}\Big), \hspace{.5cm} (c_1, c_3)=\Big(-\frac{21}{16},-\frac{13}{16}\Big).\]
However, equating expressions for $c_1$, we obtain an immediate contradiction. 
\\[7pt]
\emph{Subcase(4.13)}: The pair of tuples $(c_1, c_2)$ and $(c_1,c_3,P)$ take the form: \vspace{.15cm}
\[(c_1, c_2)=\Big(-\frac{21}{16},-\frac{13}{16}\Big), \hspace{.5cm} (c_1,c_3,P)=\bigg(\frac{1-y^2}{4},\,\frac{-3-y^2}{4},\, \frac{1+y}{2}\bigg),\]
for some $y\in\mathbb{Q}$. However, by interchanging $c_2$ and $c_3$, we may proceed as in Subcase 4.4 above. 
\\[7pt]
\emph{Subcase(4.14)}: The pair of tuples $(c_1, c_2)$ and $(c_1,c_3,P)$ take the form: \vspace{.15cm}
\[(c_1, c_2)=\Big(-\frac{21}{16},-\frac{13}{16}\Big), \hspace{.5cm} (c_1,c_3,P)=\bigg(\frac{-15t^4 - 2t^2 + 1}{4(t^2 -1)^2},\, \frac{-3t^4 - 10t^2 -3}{4(t^2 -1)^2},\, \frac{-3t^2- 1}{2(t^2 - 1)}\bigg),\]
for some $t\in\mathbb{Q}$. However, by interchanging $c_2$ and $c_3$, we may proceed as in Subcase 4.8 above. 
\\[7pt]
\emph{Subcase(4.15)}: The pair of tuples $(c_1, c_2)$ and $(c_1,c_3)$ take the form: \vspace{.15cm}
\[(c_1, c_2)=\Big(-\frac{21}{16},-\frac{13}{16}\Big), \hspace{.5cm} (c_1, c_3)=\Big(-\frac{5}{16},-\frac{13}{16}\Big).\]
However, equating expressions for $c_1$, we obtain an immediate contradiction. 
\\[7pt] 
\emph{Subcase(4.16)}: The pair of tuples $(c_1, c_2)$ and $(c_1,c_3)$ take the form: \vspace{.15cm}
\[(c_1, c_2)=\Big(-\frac{21}{16},-\frac{13}{16}\Big), \hspace{.5cm} (c_1, c_3)=\Big(-\frac{21}{16},-\frac{13}{16}\Big).\]
However, in this case $c_2=c_3$, a contradiction. 
\\[7pt] 
\textbf{Case(5):} Suppose that the orbit of $P$ enters a fixed point for only one map in $S$, say $\phi_1$, and enters a $3$-cycle for the others, $\phi_2$ and $\phi_3$. Then in particular, both $\phi_2$ and $\phi_3$ have rational points of period $3$. Moreover, $P$ has finite orbit for the set $S'=\{\phi_2,\phi_3\}$. However, Lemma \ref{lem:33} implies that this is impossible.            
\\[7pt]  
\textbf{Case(6):} Suppose that the orbit of $P$ enters a fixed point for only one map in $S$, say $\phi_1$, enters a $2$-cycle for another, say $\phi_2$, and enters a $3$-cycle for the other, $\phi_3$. Then in particular, $\phi_i$ has a rational point of period $i$ for all $1\leq i \leq 3$. Moreover, $P$ has finite orbit for both sets $S'=\{\phi_1, \phi_3\}$ and $S''=\{\phi_2,\phi_3\}$ simultaneously. Hence, Lemma \ref{lem:13} applied to $S'$ implies that $c_1=-21/16$ and $c_3=-29/16$. Likewise, Lemma \ref{lem:23} applied to $S''$ implies that $c_2=-21/16$ and $c_3=-29/16$. Therefore, we deduce that $c_1=c_2$, a contradiction.    
\\[7pt]
\textbf{Case(7):} Suppose that the orbit of $P$ enters a $2$-cycle for every map in $S$. In particular, each map in $S$ has a rational point of period two, and by replacing $P$ with $\phi_1^n(P)$ for some $n$, we may assume that a point of period two for $\phi_1$, renamed $P$, has finite orbit for both sets $S'=\{\phi_1,\phi_2\}$ and $S''=\{\phi_1,\phi_3\}$ simultaneously. Then the classification Lemma \ref{lem:22} leads us to several subcases: 
\\[7pt]
\emph{Subcase(7.1)} The pair of tuples $(c_1, c_2,P)$ and $(c_1,c_3,P)$ take the form: \vspace{.15cm}
\begin{align*}
(c_1,c_2,P)&=\bigg(\frac{-7t^4 - 2t^2 -7}{4(t^2 -1)^2},\, \frac{-3t^4 - 10t^2 -3}{4(t^2 -1)^2},\, \frac{-3t^2- 1}{2(t^2 - 1)}\bigg), \\[4pt]  
(c_1,c_3,P)&=\bigg(\frac{-7u^4 - 2u^2 -7}{4(u^2 -1)^2},\, \frac{-3u^4 - 10u^2 -3}{4(u^2 -1)^2},\, \frac{-3u^2- 1}{2(u^2 - 1)}\bigg), 
\end{align*}
for some $t,u\in\mathbb{Q}$. Equating expressions for $P$, we see that $t=\pm{u}$. However, this relationship forces $c_2=c_3$, a contradiction.  
 \\[7pt]
\emph{Subcase(7.2)} The pair of tuples $(c_1, c_2,P)$ and $(c_1,c_3)$ take the form: \vspace{.15cm}
\begin{align*}
(c_1,c_2,P)&=\bigg(\frac{-7t^4 - 2t^2 -7}{4(t^2 -1)^2},\, \frac{-3t^4 - 10t^2 -3}{4(t^2 -1)^2},\, \frac{-3t^2- 1}{2(t^2 - 1)}\bigg),\\[5pt] 
(c_1,c_3)&\in\bigg\{\Big(-\frac{3}{4},-\frac{7}{4}\Big),\Big(-\frac{7}{4},-\frac{3}{4}\Big)\;\Big(-\frac{13}{16},-\frac{21}{16}\Big),\Big(-\frac{21}{16},-\frac{13}{16}\Big) \; \Big(-\frac{37}{16},-\frac{21}{16}\Big) \bigg\},
\end{align*}
for some $t\in\mathbb{Q}$. However, equating expressions for $c_1$ we obtain equations 
\[\frac{-7t^4 - 2t^2 -7}{4(t^2 -1)^2}=-\frac{3}{4}, -\frac{13}{16}, -\frac{21}{16}\] 
with no rational solutions, a contradiction. On the other hand, if $\frac{-7t^4 - 2t^2 -7}{4(t^2 -1)^2}=-\frac{7}{4}$, then $t=0$. Hence, $(c_1,c_2,P)=(-7/4,-3/4,-1/2)$ and $(c_1,c_3)=(-7/4,-3/4)$. In particular, we see that $c_2=c_3$, a contradiction.    
\\[7pt]
\emph{Subcase(7.3)} The pair of tuples $(c_1, c_2)$ and $(c_1,c_3,P)$ take the form: \vspace{.15cm}
\begin{align*}
(c_1,c_2)&\in\bigg\{\Big(-\frac{3}{4},-\frac{7}{4}\Big),\Big(-\frac{7}{4},-\frac{3}{4}\Big)\;\Big(-\frac{13}{16},-\frac{21}{16}\Big),\Big(-\frac{21}{16},-\frac{13}{16}\Big) \; \Big(-\frac{37}{16},-\frac{21}{16}\Big) \bigg\}, \\[5pt] 
(c_1,c_3,P)&=\bigg(\frac{-7t^4 - 2t^2 -7}{4(t^2 -1)^2},\, \frac{-3t^4 - 10t^2 -3}{4(t^2 -1)^2},\, \frac{-3t^2- 1}{2(t^2 - 1)}\bigg) 
\end{align*}
for some $t\in\mathbb{Q}$. Interchanging $(c_1,c_2)$ with $(c_2,c_3)$, we may proceed as in Subcase 7.2. 
\\[7pt]
\emph{Subcase(7.4)} The pair of tuples $(c_1, c_2)$ and $(c_1,c_3,P)$ take the form: \vspace{.15cm}
\begin{align*}
(c_1,c_2)&\in\bigg\{\Big(-\frac{3}{4},-\frac{7}{4}\Big),\Big(-\frac{7}{4},-\frac{3}{4}\Big)\;\Big(-\frac{13}{16},-\frac{21}{16}\Big),\Big(-\frac{21}{16},-\frac{13}{16}\Big) \; \Big(-\frac{37}{16},-\frac{21}{16}\Big) \bigg\}, \\[5pt] 
(c_1,c_3)&\in\bigg\{\Big(-\frac{3}{4},-\frac{7}{4}\Big),\Big(-\frac{7}{4},-\frac{3}{4}\Big)\;\Big(-\frac{13}{16},-\frac{21}{16}\Big),\Big(-\frac{21}{16},-\frac{13}{16}\Big) \; \Big(-\frac{37}{16},-\frac{21}{16}\Big) \bigg\},
\end{align*}
However, equating expressions for $c_1$, we need only consider the cases 
\[\Scale[.95]{(c_1,c_2,c_3)\in\Big\{(-\frac{3}{4},-\frac{7}{4},-\frac{7}{4}),\;(-\frac{7}{4},-\frac{3}{4},-\frac{3}{4})\;(-\frac{13}{16},-\frac{21}{16},-\frac{21}{16}),\;(-\frac{21}{16},-\frac{13}{16},-\frac{13}{16}),\;(-\frac{37}{16},-\frac{21}{16}, -\frac{21}{16}) \Big\}}.\] 
However, in all cases, $c_2=c_3$, a contradiction of our assumption that the $c_i$'s are distinct. 
\\[7pt] 
\textbf{Case(8):} Suppose that the orbit of $P$ enters a $3$-cycle for every map in $S$. Then in particular, both $\phi_1$ and $\phi_2$ have rational points of period $3$. Moreover, $P$ has finite orbit for the set $S'=\{\phi_1,\phi_2\}$. However, Lemma \ref{lem:33} implies that this is impossible.            
 \\[7pt]
\textbf{Case(9):} Suppose that the orbit of $P$ enters a $2$-cycle for two maps in $S$, say $\phi_1$ and $\phi_2$, and enters a $3$-cycle for the third $\phi_3$. Then in particular, $\phi_1$ and $\phi_2$ have rational points of period $2$, and $\phi_3$ has a rational point of period $3$. Moreover, $P$ has finite orbit for both sets $S'=\{\phi_1, \phi_3\}$ and $S''=\{\phi_2,\phi_3\}$ simultaneously. Hence, Lemma \ref{lem:23} applied to $S'$ implies that $c_1=-21/16$ and $c_3=-29/16$. Likewise, Lemma \ref{lem:23} applied to $S''$ implies that $c_2=-21/16$ and $c_3=-29/16$. Therefore, $c_1=c_2$, a contradiction.      
\\[7pt] 
\textbf{Case(10):} Suppose that the orbit of $P$ enters a $2$-cycle for one map in $S$, say $\phi_1$, and enters a $3$-cycle for the others, $\phi_2$ and $\phi_3$. Then  in particular, both $\phi_2$ and $\phi_3$ have rational points of period $3$. Moreover, $P$ has finite orbit for the set $S'=\{\phi_2,\phi_3\}$. However, Lemma \ref{lem:33} implies that this is impossible.            
\end{proof}
In particular, Case $(1)$ through Case $(10)$ above imply that if $S=\{x^2+c_1,x^2+c_2, x_2+c_3\}$ has a finite orbit point $P\in\mathbb{Q}$, then 
\begin{equation}\label{classification:3maps}
\begin{split} 
S&=\bigg\{x^2-\frac{5}{16},\, x^2-\frac{13}{16},\, x^2-\frac{21}{16}\bigg\}\;\;\text{and}\;\;P\in\bigg\{\pm\frac{1}{4},\,\pm\frac{3}{4},\, \pm\frac{5}{4}\bigg\},\\[2pt] 
\text{or}\;\;\;\;\;\;\;&\\[2pt] 
S&=\bigg\{x^2+\frac{3}{16},\, x^2-\frac{5}{16},\, x^2-\frac{13}{16}\bigg\}\;\;\text{and}\;\;P\in\bigg\{\pm\frac{1}{4},\,\pm\frac{3}{4}\bigg\}. 
\end{split} 
\end{equation} 
This completes the proof of statement (1) of Theorem \ref{thm:classification}. As for statement (2), suppose that $S$ has at least four quadratic polynomials, $\phi_i$ for $1\leq i\leq4$, and that $P\in\mathbb{Q}$ has finite orbit for $S$. Then $P$ has finite orbit for the subsets $S'=\{\phi_1,\phi_2,\phi_3\}$ and $S''=\{\phi_1,\phi_2,\phi_4\}$ simultaneously. Therefore, statement (1) of Theorem \ref{thm:classification} implies that $(S',P)$ and $(S'',P)$ must correspond to one the pairs in (\ref{classification:3maps}) above. In particular, since $S'\neq S''$, it must be the case that 
\[S'\cup S''=\bigg\{x^2+\frac{3}{16},x^2-\frac{5}{16},\, x^2-\frac{13}{16},\, x^2-\frac{21}{16}\bigg\}\;\;\;\text{and}\;\; P\in\bigg\{\pm\frac{1}{4},\,\pm\frac{3}{4}\bigg\}.\] 
However, $P$ must have finite orbit for $S'\cup S''$, and this is not the case: in all cases for $P$, the point $Q:=(f_4\circ f_2\circ f_1\circ f_4)(P)$ satisfies $f_1^4(Q)\neq f_1^2(Q)$. Therefore, Theorems 1, 2, and 3 of \cite{Poonen} imply that $Q$ is not preperiodic for $f_1$. Hence, $P$ cannot be a finite orbit point for $S$, a contradiction. 
 
\end{document}